\newtheorem{theorem}{Theorem}
\newtheorem{corollary}[theorem]{Corollary}
\newtheorem{definition}{Definition}
\newtheorem{example}{Example}[section]
\newtheorem{lemma}[theorem]{Lemma}
\newtheorem{claim}{Claim}
\newtheorem{proposition}[theorem]{Proposition}
\newenvironment{proof}{\noindent {\bf Proof.}}{\hfill\rule{3mm}{3mm}\par\medskip}
\newtheorem{prelem}{{\bf Theorem}}
\newenvironment{oldtheorem}{\begin{prelem}{\hspace{-0.5
em}{\bf}}}{\end{prelem}}
\newtheorem{prelemc}{{\bf Conjecture}}
\title{Hamiltonian cycles in planar cubic graphs with facial $2-$factors, and a new partial solution of Barnette's Conjecture}
\author{\sc Behrooz Bagheri Gh.${}^{a,b}$, Tomas Feder${}^{c}$,  Herbert Fleischner${}^{a}$, \\ Carlos Subi}
\date{}
\begin{document}
\maketitle
  \vspace{-1cm}
  \begin{center}
  	$a$
  	{\small \it Algorithms and Complexity Group}\\
  	{\small \it Vienna University of Technology}\\
  	{\small  \it Favoritenstrasse 9-11,}
  	\vspace*{5mm}
  	{\small \it 1040 Vienna, Austria }\footnote{
  		Research supported in part by FWF Project P27615-N25.} \\
  	$b$
{\small \it Department of Mathematics} \\
{\small \it West Virginia University} \\
{\small \it Morgantown,}
\vspace*{5mm}
{\small \it WV 26506-6310, USA} \\
  	$c$
  	{\small \it Computer Science Department} \\
  	{\small \it Stanford University} \\
  	{\small \it }
  	{\small \it Stanford, California 94305, USA} \
  \end{center}
\vspace{-.5cm}
\begin{abstract}
We study the existence of hamiltonian cycles in plane cubic graphs $G$ having a facial $2-$factor $\mathcal{Q}$. Thus hamiltonicity in $G$  is 
transformed into the existence of a (quasi) spanning tree of faces in the 
contraction $G/\mathcal{Q}$. In particular, we study the case where $G$  is the leapfrog extension (called vertex envelope in~(Discrete Math., 309(14):4793--4809, 2009)) of a plane cubic graph $G_0$.
As a consequence we prove hamiltonicity in the leapfrog extension of planar cubic cyclically $4-$edge-connected bipartite graphs. This and other 
results of this paper establish partial solutions of Barnette's Conjecture according to which every $3-$connected cubic planar bipartite graph is hamiltonian. These results go considerably beyond Goodey's result on this topic (Israel J.
Math., 22:52--56, 1975).\\
\\
%
{\bf Keywords:}
 Barnette's Conjecture; eulerian plane graph; hamiltonian cycle; spanning tree of faces;  $A-$trail.
\end{abstract}

\section{Introduction and Preliminary Discussion}

Hamiltonian graph theory has its roots in the icosian game which was  introduced 
by W.R. Hamilton in $1857$. However, Kirkman presented his paper 
{\it On the presentation of polyhedra},~\cite{Kirkman}, to the Royal Society 
already in $1855$; and it was published in $1856$.

The early development of hamiltonian graph theory focused to a large
extent on planar cubic graphs; and there are good reasons for this course 
of development. For, in $1884$, Tait  conjectured that every cubic 
$3-$connected planar graph is 
hamiltonian,~\cite{Tait}. 
And Tait knew that the validity of his conjecture would yield a simple 
proof of the Four Color Conjecture. On the other hand, the Petersen graph 
is the smallest non-planar $3-$connected cubic graph which is not hamiltonian,~\cite{Petersen}.
Tait's Conjecture
 was disproved by Tutte in $1946$, who constructed a counterexample with $46$ 
vertices,~\cite{Tutte1946}; other researchers later found even smaller 
counterexamples. However, none of these known counterexamples are 
bipartite. Tutte himself conjectured that every cubic $3-$connected 
bipartite graph is hamiltonian,~\cite{Tutte1971}, but this was shown to be 
false by the construction of a counterexample, 
the Horton graph,~\cite{Horton}.  Barnette  proposed a 
combination of Tait's and Tutte's Conjectures that every counterexample 
to Tait's Conjecture is non-bipartite.\\

\medskip \noindent
{\bf Barnette's Conjecture}~\cite{Barenette}
{\it Every $3-$connected cubic planar bipartite graph is hamiltonian.}\\

This conjecture was verified for graphs with up to
$64$ vertices by Holton, Manvel and McKay,~\cite{Holton}. The conjecture 
also holds for the infinite family of graphs where all faces are either quadrilaterals or  hexagons, as shown by
Goodey,~\cite{Goodey}. Without the assumption of $3-$connectedness, it is NP-complete to decide whether
a $2-$connected cubic planar bipartite graph  is hamiltonian, as shown by Takanori, Takao and Nobuji,~\cite{Takanori}. 

For a more detailed account of the early development of hamiltonian graph theory we refer the interested reader 
to~\cite{Biggs}. 

Given the fact that the existence of hamiltonian cycles is an NP-complete problem (in rather special classes of graphs),
one has to develop ad hoc proof techniques depending on the class of graphs,
whose members are being shown to be hamiltonian.

As for the terminology used in this paper we follow~\cite{Bondy} unless stated explicitly otherwise.
In particular, the subset $E(v)$ of $E(G)$ denotes the set of edges incident to $v\in V(G)$.

\begin{remark}
\begin{description}
\item[1.]
Two edges $e=xy$ and $e^{'}=xy$ are called {\sf parallel edge}s
if the digon $D$ defined by $e$ and $e^{'}$ has no vertices inside.
If two different triangles $T_1,T_2$ have an edge  in common, then 
they have no other edge in common (because of our understanding that 
parallel edges are treated as a single edge), unless there is 
$e_i=xy\in E(T_i),\ i=1,2$, such that $\langle e_1,e_2\rangle$ defines a digon with 
some vertex  inside.
\item[2.] 
Given a $2-$connected plane  graph, we do not distinguish between faces and their face boundaries.
Observe that in planar $3-$connected graphs $H$,
the face boundaries are independent from any actual 
embedding of $H$ in the plane or sphere.
\end{description}

\medskip \noindent
Next, we state some definitions. 

\begin{definition}
A cubic graph $G$ is {\sf cyclically $k-$edge-connected} if at least $k$ edges must be removed to disconnect $G$ either into two components 
 each of which contains a cycle provided $G$ contains two disjoint cycles, or else into two non-trivial components. The {\sf cyclic edge-connectivity} of $G$ is the maximum $k$ such that $G$ is cyclically $k-$edge-connected, denoted by $\kappa{'}_c(G)$.
\end{definition}
\begin{definition}
Let $C$ be a cycle in a plane graph $H$. The cycle $C$ divides 
the plane into two disjoint open domains. The  {\sf interior} $(${\sf exterior}$)$ of $C$ 
is the bounded $($unbounded$)$ domain and is denoted by 
$int(C)\ (ext(C))$. 
By treating parallel edges as a single edge,
we say a cycle $C^{'}$ is {\sf inside} of 
$C$ if $int(C^{'})\subseteq int(C)$.
Moreover,  
a cycle $C$ is said 
{\sf to contain a vertex $v$ inside (outside)}   if $v\in int(C)$ $(v\in ext(C))$. If 
$$int(C)\cap V(H)\neq \emptyset \neq ext(C)\cap V(H),$$ then $C$ is said to be a  {\sf separating} cycle in $H$.
However, in the case of a plane embedding we distinguish between the unbounded or outer face $F_o$ 
$($i.e., $ext(F_o) \cap V(G) = \emptyset)$ and a bounded face $F_i\ ($i.e., $int(F_i) \cap V(G) = \emptyset)$.
\end{definition}

\end{remark}
\begin{definition}
Given a graph $H$ and a vertex $v$, a fixed sequence 
$\langle e_1,\ldots,e_{\deg(v)}\rangle$ of the edges in $E(v)$ 
is called a {\sf positive ordering} of $E(v)$ and is denoted by 
$O^{+}(v)$. If  $H$ is imbedded in some orientable surface, one such $O^{+}(v)$
is given by the counterclockwise cyclic ordering of the edges incident 
to $v$.
\end{definition}

\begin{definition}\label{DEF:A-trail}
Let $H$ be an eulerian graph with a given positive ordering $O^{+}(v)$ 
for each vertex $v\in V(H)$. An eulerian trail $L$ in $H$ is an {\sf $A-$trail} 
if  $\{e_i,e_j\}\subseteq E(v)$ being a pair of consecutive edges  in $L$ implies 
$j=i\pm1 \pmod{deg(v)}$, for every $v\in V(H)$. 
-- As a consequence, in an $A-$trail in a $2-$connected plane graph any two consecutive edges belong to a face boundary.
\end{definition}

\begin{definition}\label{DEF:Radial}
\begin{description}

\item[$(i)$]
Suppose $H$ is a $2-$connected plane  graph. Let  $\mathcal{F}(H)$ be 
the set of faces of $H$. The {\sf radial graph} of $H$ denoted by 
$\mathcal{R}(H)$ is a bipartite graph
with the vertex bipartition $\{V(H),\mathcal{F}(H)\}$ 
such that $xf\in E(\mathcal{R}(H))$ if and only if $x$ is a vertex in the boundary of 
$F\in \mathcal{F}(H)$ corresponding to $f\in V(\mathcal{R}(H))$.

\item[$(ii)$]
Let $U\subseteq V(H)$ and let $\mathcal{T}\subset \mathcal{F}(H)$ be a 
set of bounded faces. 
The {\sf restricted radial graph} 
$\mathcal{R}(U,\mathcal{T})\subset \mathcal{R}(H)$ is defined  by $\mathcal{R}(U,\mathcal{T})={\mathcal{R}(H)}[U\cup \mathcal{T}]$.
\end{description}
\end{definition}

\begin{definition}\label{DEF:Leapfrog}
Let $G$ be a $2-$connected plane graph and let $v$ be a vertex of $G$ with $\deg(v)\ge 3$. Also assume that a sequence 
$\langle e_1,\ldots,e_{\deg(v)}\rangle$, $e_i=u_iv,\ i=1,\ldots,\deg(v)\color{red}{,}$ 
is given by the counterclockwise cyclic ordering of the edges incident to $v$. 
\begin{description}

\item[$(i)$]
  A {\sf truncation} of $v$ is the process of replacing $v$ with a 
cycle $C_v=v_1\ldots v_{\deg(v)}v_1$ and replacing $e_i=u_iv$ with 
$e_i^{'}=u_iv_i$, for $i=1,\ldots,\deg(v)$, in such a way
that the result is a plane graph again. A plane graph obtained from 
$G$ by truncating  all vertices of $G$ is called {\sf truncation of $G$} 
and denoted by $Tr(G)$
subject to the condition that $C_v\cap C_w=\emptyset$ for every pair $\{v,w\}\subset V(G)$.
\item[$(ii)$]
The {\sf leapfrog extension} of the plane graph $G$ is $Tr(G^{*})$ where
$G^{*}$ is the dual of $G$; we denote it by $Lf(G)$.
Alternatively and more formally, the leapfrog extension $Lf(G)$ of a plane 
graph $G$ is $(G\cup \mathcal{R}(G))^{*}$. In the case of cubic $G$,
it can be viewed as obtained from $G$  by replacing every $v\in V(G)$ by
a hexagon $C_6(v)$, with $C_6(v)$ and $C_6(w)$ sharing an edge if and 
only if $vw\in E(G)$; and these hexagons are faces of $Lf(G)$. 
\end{description}
\end{definition}

Next we quote some known results.

\begin{oldtheorem}~$(${\rm\cite[Lemma $2$ and Theorem $3$]{FleischnerEnvelope}}$)$\label{TH:Lf-3-conn.} 
Let $G$ be a plane graph.  The following is  true.

\begin{description}

\item[$(i)$]
If $G$ is connected  and $|E(G)|\ge 2$, then $Lf(G)$ is $2-$connected.

\item[$(ii)$]
If $G$  is  a simple $2-$connected plane graph, then $Lf(G)$ is $3-$connected.
\end{description}
\end{oldtheorem}
\begin{oldtheorem}~$(${\rm\cite[Theorem $25$]{FleischnerEnvelope}}$)$\label{TH:vertexenvelope} 
A plane cubic graph $G$ is the leapfrog extension of a cubic plane graph 
$G_0$ if and only if $G$ has a facial $2-$factor $\mathcal{Q}$, and all 
other face boundaries of $G$ are hexagons.
\end{oldtheorem}

\begin{oldtheorem}~$(${\rm\cite{Payan}}$)$\label{TH:Payan}
Let $G$ be a cyclically $4-$edge-connected  cubic graph of order $n \equiv 2\pmod{4}$.
Then $G$ has an independent set $S$ of order $(n+2)/4$ such that $G[V(G)\setminus S]$ is a tree.
\end{oldtheorem}

\begin{oldtheorem}~$(${\rm\cite[Corollary 15]{FleischnerEnvelope}}$)$\label{TH:LF-ham.} 
If $G$ is a cyclically $4-$edge-connected planar cubic graph of order $n \equiv 2\pmod{4}$, then $Lf(G)$ is hamiltonian.
\end{oldtheorem}

Below we  present a proof of Theorem~\ref{TH:LF-ham.} which relies exclusively on Theorem~\ref{TH:Payan} and differs therefore from the proof in~\cite{FleischnerEnvelope}.

\begin{proof}
 We draw $Lf(G)$ in the plane and draw $G$ so to speak inside of $Lf(G)$ in such a way that $v \in V(G)$ lies inside the corresponding hexagonal face $C_6(v)\subset Lf(G)$ and $vw \in E(G)$ crosses the edge lying in $C_6(v) \cap C_6(w)$ (see Definition~\ref{DEF:Leapfrog}(ii)). 
 
 Now, since $G$ is a cyclically $4-$edge-connected  cubic graph of order $n \equiv 2\pmod{4}$, by Theorem~\ref{TH:Payan} there exists an independent set $S \subset V(G)$ such that  $T =G[V(G)\setminus S]$ is a tree. Now, if we delete
 in $Lf(G)$ those edges of $C_6(x)$ which do not belong to any other $C_6(y)$,  for every $x \in S$, then we obtain the plane graph $G(T)$ covered by the hexagonal faces $C_6(q)$ where $q\in V(G)\setminus S = V(T)$. Letting $K$ be the set of these hexagonal faces, it follows that $T = I(K)$ where $I(K)$ is the intersection graph of $K$. Since for every pair $C_6(t), C_6(u) \in K$ we have $C_6(t) \cap C_6(u) = \emptyset$ or a single edge of $Lf(G)$, and because $I(K)$ is a tree and $V(G(T)) = V(Lf(G))$ by construction, it follows that $G(T)$ has a (unique) hamiltonian cycle which is also a hamiltonian cycle of $Lf(G)$.
\end{proof}

We note in passing that others speak of vertex envelope, or leap frog construction, or leap 
frog operation, or  leap frog transformation (see e.g.~\cite{FleischnerEnvelope,Fowler,Kardos,Yoshida}). 

\begin{definition}
Let $H$ be a  $2-$connected plane graph, let $U\subseteq V(H)$  
and let $\mathcal{T}\subset \mathcal{F}(H)$ be a set of bounded faces
whose boundaries are pairwise edge-disjoint
and such that every vertex of $H$ is contained in some element of $\mathcal{T}$.
 We define a subgraph $H_{\mathcal{T}}$ of $H$  by 
$H_{\mathcal{T}}=H[ \cup_{F\in \mathcal{T}}E(F)]$.
If  $\big|\big\{F\in \mathcal{T}\ :\ x\in V(F)\big\}\big|=\frac{1}{2} \deg_H(x)$
for every $x\in V(H)\setminus U$,
and if $\mathcal{R}(U,\mathcal{T})$ is a tree, then we call $H_{\mathcal{T}}$
a {\sf quasi spanning tree of faces} of $H$, and the vertices in 
$U\ (V(H)\setminus U)$ are called {\sf proper} $(${\sf quasi}$)$ vertices.
If $U=V(H)$, then $H_{\mathcal{T}}$ is called a {\sf spanning tree of faces}.
-- In other words, a spanning tree of faces is a spanning bridgeless cactus whose cycles are face boundaries.
\end{definition}

We observe that if $H$ is a  plane eulerian graph with $\delta(H)\ge 4$
having an $A-$trail $T_{\varepsilon}$, then  $T_{\varepsilon}$ defines 
uniquely a quasi spanning tree of faces as follows 
(see~\cite[pp. $VI.71-VI.77$]{Fleischner}). Starting with a 
$2-$face-coloring of $H$ with colors $1$ and $2$, suppose the outer face 
of $H$ is colored $1$. Then $T_{\varepsilon}$ defines in every 
$v\in V(H)$ a $1-$splitting or a $2-$splitting thus defining a vertex 
partition $V(H)=V_1 \dot{\cup} V_2$ ($ T_{\varepsilon} $ defines 
a $k-$splitting in every $v\in V_k$).
Now, the set $\mathcal{T}$ of all faces colored $2$ defines a quasi spanning tree of
faces $H_{\mathcal{T}}$ with $V_1$ being the set of all quasi vertices of  
$H_{\mathcal{T}}$.
Conversely, a (quasi) spanning tree of faces $H_{\mathcal{T}}$ defines
uniquely an $A-$trail in the subgraph $H_\mathcal{T}$
which is an $A-$trail of $H$ since $\mathcal{T}$ is the set of faces colored $2$.

The aforementioned relation between the concepts of $A-$trail  and 
(quasi) spanning tree of faces is not a coincidence. In fact, it had been shown
(\cite[pp. $VI.112-VI.113$]{Fleischner})
that 

$\bullet$ {\it Barnette's Conjecture is true if and only if every simple $3-$connected eulerian 
triangulation of the plane admits an $A-$trail.}\\ 

We point out, however, that the concept of (quasi) spanning tree of faces
is a somewhat more general tool to deal with hamiltonian cycles in
plane graphs, than the concept of $A-$trails. Below we shall prove 
the existence of (quasi) spanning trees of faces in plane graphs $H$ derived from  plane cubic graphs 
having a facial $2-$factor
(rather than being bipartite - which implies the existense of three facial $2-$factors),
provided the cubic graphs satisfy some extra conditions.
In this context we also want to point out that every simple $4-$connected 
eulerian triangulation of the plane has a   quasi spanning tree of faces (see Corollary~\ref{cor:4con-eulerian-triangulation} below), whereas it is an unsolved 
problem (see~\cite[Conjecture $VI.86$]{Fleischner}), that every simple $4-$connected 
eulerian triangulation of the plane admits an $A-$trail.

Finally observe that we did not include figures in proofs. Instead we elaborated arguments 
to such an extent that the reader himself/herself may draw such figures easily (and in a unique way) as he/she sees fit.
We also wish to point out that this paper is the result of extracting
those results and their proofs of~\cite{Feder} which appear correct to 
all four of us; they have not been published yet
in any refereed journal.
 On top of it, the first author of this paper succeeded in 
developing additional results and their proofs, basing his contribution 
on some of the work in~\cite{Feder}.
Moreover, we relate some of the results of this paper to the theory of 
$A-$trails, as developed in~\cite{Fleischner}.

\section{Hamiltonian cycle from quasi spanning tree of faces}

In what follows \\

\medskip
 \noindent
{\it $G$ always denotes a $3-$connected cubic planar graph 
having a facial $2-$factor $\mathcal{Q}$ $($i.e., a $2-$factor whose cycles are face 
boundaries of $G)$, together with a fixed imbedding in the Euclidean plane; we denote the set of face boundaries of $G$ not in 
$\mathcal{Q}$ by $\mathcal{Q}^{c}$.
 In general, when we say that  a face $F$ is an $\mathcal{X}-$face, we mean that 
  $F\in \mathcal{X}$.
 Let $H$ always denote the reduced graph obtained from $G$ by contracting  the $\mathcal{Q}-$faces 
  to single vertices; i.e., $H=G/\mathcal{Q}$.
  \hfill $(\bf H)$}\\
  
   Suppose $H$ has a quasi spanning tree of 
 faces $H_{\mathcal{T}}$ with proper vertex set $U$.   
 Then the subgraph $H_{\mathcal{T}}$ has a unique $A-$trail which can be transformed    into a hamiltonian cycle $C_G$ of $G$ such that
 the $\mathcal{Q}-$faces 
 corresponding to the  vertices  in $U$ are in $\mathcal{Q}\cap int(C_G)$,
 whereas the faces  of $\mathcal{Q}$ in $\mathcal{Q}\cap ext(C_G)$  
 correspond to  the quasi vertices.
 Moreover, the face of $G$ corresponding to the outer face of $H$ lies in $ext(C_G)$. 

Conversely, suppose $C_G$ is a hamiltonin cycle of $G$ with  outer 
$\mathcal{Q}^{c}-$face in $ext(C_G)$ such that no two $\mathcal{Q}^{c}-$faces sharing an edge
lie in $int(C_G)$. Let $U\subset V(H)$ be the vertex set corresponding to $\mathcal{Q}-$faces in $int(C_G)$. Also, let $\mathcal{T}$ be the set of faces
of $H$ corresponding to $\mathcal{Q}^{c}-$faces in $int(C_G)$.
Since every pair of $\mathcal{Q}^{c}-$faces in $int(C_G)$ has no edge in common by hypothesis, 
$C_G$ can be transformed into an $A-$trail of $H_{\mathcal{T}}$.
Now it is easy to see that $H_{\mathcal{T}}$ is a quasi spanning tree of faces of $H$ whose quasi vertices correspond to the $\mathcal{Q}-$faces in $ext(C_G)$. \\
 
 We summarize the preceding considerations in the following result.
 
\begin{proposition}~$(${\rm\cite[Proposition 1]{Feder}}$)$
\label{PR:1}
Let $G,\mathcal{Q},$ and $H=G/\mathcal{Q}$  be as stated in $(\bf H)$.
  The reduced
graph $H$ has a quasi spanning tree of faces, $H_{\mathcal{T}}$ with face set $\mathcal{T}$, and with
 the external face of $H$ not 
in $\mathcal{T}$ if and only if $G$
has a hamiltonian cycle $C$ with  the external $\mathcal{Q}^{c}-$face lying in $ext(C)$, 
with all $\mathcal{Q}-$faces corresponding to proper vertices of $H_{\mathcal{T}}$ lying in $int(C)$, with 
all $\mathcal{Q}-$faces corresponding to quasi vertices of $H_{\mathcal{T}}$
lying in $ext(C)$, and such that no two $\mathcal{Q}^{c}-$faces sharing an edge are both 
inside of $C$.
\end{proposition}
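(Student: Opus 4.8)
The plan is to organise the discussion that immediately precedes the statement into the two implications and to make the two translations involved explicit. The first ingredient is the dictionary of~\cite{Fleischner} between a quasi spanning tree of faces $H_{\mathcal{T}}$ of a plane graph and an $A$-trail of the subgraph $H_{\mathcal{T}}$: since $H_{\mathcal{T}}=H[\cup_{F\in\mathcal{T}}E(F)]$ is an edge-disjoint union of the $\mathcal{T}$-face boundaries, it is a bridgeless cactus which, when connected, carries a unique $A$-trail; conversely such an $A$-trail recovers the data $(\mathcal{T},U)$ together with the side on which each $\mathcal{T}$-face and each vertex of $H_{\mathcal{T}}$ sits, via the $2$-colouring of the regions cut out by $H_{\mathcal{T}}$ with the outer region coloured $1$ (a $k$-splitting being taken at the vertices of $V_k$, the quasi vertices being $V_1$ and the proper vertices $V_2$). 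The second ingredient is the un-contraction recovering $G$ from $H=G/\mathcal{Q}$: every $v\in V(H)$ is the image of a single $\mathcal{Q}$-face $Q_v$, and a set of transitions at $v$ (pairs of consecutive edges in the sense of Definition~\ref{DEF:A-trail}) lifts to a set of boundary arcs of $Q_v$, so that an $A$-trail of $H_{\mathcal{T}}$ unrolls into a $2$-regular spanning subgraph of $G$.

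For the implication from $H$ to $G$: given $H_{\mathcal{T}}$ with proper vertex set $U$ and with the outer face of $H$ not in $\mathcal{T}$, the fact that $\mathcal{R}(U,\mathcal{T})$ is a tree forces $H_{\mathcal{T}}$ to be a connected bridgeless cactus, so it has a unique $A$-trail $T_{\varepsilon}$. Lifting the transition of $T_{\varepsilon}$ at each $v$ to boundary arcs of $Q_v$ produces a $2$-regular spanning subgraph $C$ of $G$, and connectedness of $C$ — hence that $C$ is a hamiltonian cycle — again follows from $\mathcal{R}(U,\mathcal{T})$ being a tree. In the $2$-colouring normalised so that the region containing the outer face of $H$ has colour $1$, the $\mathcal{T}$-faces are precisely the colour-$2$ faces; the quasi vertices are those at which $T_{\varepsilon}$ takes the $1$-splitting, and after unrolling their $\mathcal{Q}$-faces lie in $ext(C)$, while the proper vertices (those in $U$) take the $2$-splitting and their $\mathcal{Q}$-faces lie in $int(C)$. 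Two colour-$2$ faces are never incident with a common edge — equivalently, by the definition of a quasi spanning tree of faces the $\mathcal{T}$-face boundaries are pairwise edge-disjoint — which is exactly the condition that no two $\mathcal{Q}^{c}$-faces sharing an edge both lie in $int(C)$; and the $\mathcal{Q}^{c}$-face of $G$ belonging to the outer face of $H$ lies in $ext(C)$ because that face is not in $\mathcal{T}$.

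For the converse: given a hamiltonian cycle $C$ of $G$ with the outer $\mathcal{Q}^{c}$-face in $ext(C)$ and no two edge-sharing $\mathcal{Q}^{c}$-faces both in $int(C)$, contract every $\mathcal{Q}$-face. The image of $C$ in $H$ is a closed trail, and the hypothesis on the $\mathcal{Q}^{c}$-faces is exactly what forbids a crossing transition at a vertex of $H$, so at each vertex the trail realises a $1$- or a $2$-splitting; it is therefore an $A$-trail of the subgraph $H_{\mathcal{T}}$, where $\mathcal{T}$ is the set of $H$-faces coming from the $\mathcal{Q}^{c}$-faces in $int(C)$. Running the dictionary backwards identifies $H_{\mathcal{T}}$ as a quasi spanning tree of faces whose proper vertex set $U$ consists of the vertices coming from the $\mathcal{Q}$-faces in $int(C)$ and whose quasi vertices come from the $\mathcal{Q}$-faces in $ext(C)$; and the outer face of $H$ is not in $\mathcal{T}$ since the outer $\mathcal{Q}^{c}$-face lies in $ext(C)$.

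The one step that needs genuine care is the unrolling in the first implication: one must verify that lifting the transitions of $T_{\varepsilon}$ along the $\mathcal{Q}$-face boundaries yields a subgraph of $G$ that is simultaneously $2$-regular and connected, and that the resulting $int/ext$ assignment agrees with the fixed planar embedding of $G$. The remainder is bookkeeping with the $A$-trail / quasi-spanning-tree correspondence and with the definition $H=G/\mathcal{Q}$.
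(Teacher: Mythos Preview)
Your proposal is correct and follows essentially the same route as the paper: the proposition is stated as a summary of the preceding discussion, which already records both implications via the $A$-trail/quasi-spanning-tree dictionary of~\cite{Fleischner} together with the un-contraction $H=G/\mathcal{Q}\rightsquigarrow G$, and your write-up simply makes those two translations explicit. The one step you flag as needing care (that the lifted transitions of $T_{\varepsilon}$ along the $\mathcal{Q}$-face boundaries assemble into a single hamiltonian cycle with the stated $int/ext$ assignments) is also left implicit in the paper's discussion, so the level of detail matches.
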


\begin{example}
In Figure~$\ref{FIG:HamiltonianCycle}$, a $3-$connected cubic planar graph  $G_0$ is given  with a facial $2-$factor $\mathcal{Q}_0=\{v_0v_1v_6v_7v_0,v_2v_3v_{24}v_{25}v_2,v_4v_5v_{13}v_{14}v_4,v_8v_9v_{18}v_{19}v_8,v_{10}v_{11}v_{12}v_{10},\\v_{15}v_{16}v_{17}v_{22}v_{23}v_{15},v_{20}v_{21}v_{26}v_{27}v_{20}\}$.
The hamiltonian cycle $C_0=v_0v_1\ldots v_{27}v_0$  $($bold face lines in $G_0$ in Figure~$\ref{FIG:HamiltonianCycle})$ satisfiies all conditions in Proposition~$\ref{PR:1}$ except the last one; there are two $\mathcal{Q}_0^{c}-$faces inside of $C_0$ sharing the edge
$v_{11}v_{16}$. 
As one sees in the reduced graph $H_0$, the set of faces corresponding  to the $\mathcal{Q}_0^{c}-$faces inside of $C_0$ do not 
correspond to a quasi spanning tree of faces of $H_0$.

\vspace{1cm}
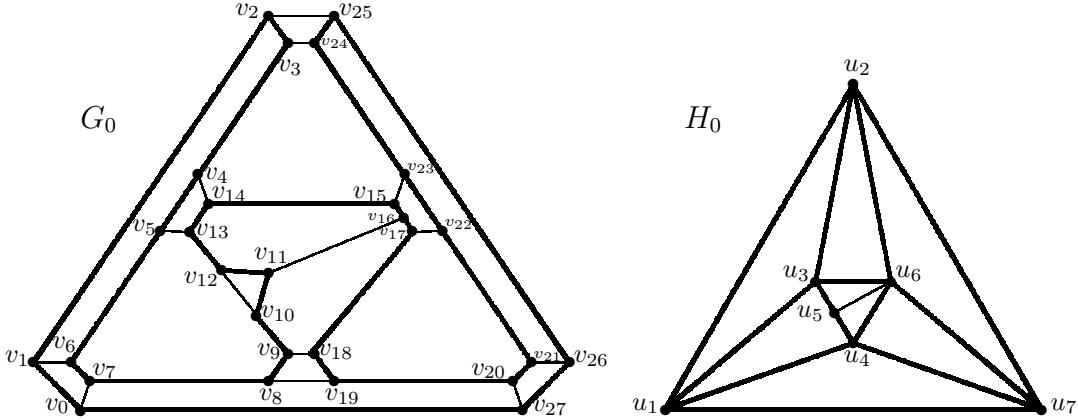
\begin{figure}[ht]

\setlength{\unitlength}{0.125cm}
\vspace{1cm}
\begin{center}

\begin{floatrow} 

\begin{picture}(40,30)
\put(0,30){$G_0$}
\put(0,-0.2){\circle*{1.2}}
\put(-3.75,0){\footnotesize$v_0$}
\put(-5,5){\circle*{1.2}}
\put(-8,5){\footnotesize$v_1$}
\put(20,41.9){\circle*{1.2}}
\put(16.5,42){\footnotesize$v_2$}
\put(22.1,39){\circle*{1.2}}
\put(21,36){\footnotesize$v_3$}
\put(12.5,25){\circle*{1.2}}
\put(13.2,24.5){\footnotesize$v_4$}
\put(8.5,19){\circle*{1.2}}
\put(5.5,19){\footnotesize$v_5$}
\put(-1,5){\circle*{1.2}}
\put(-3,6.5){\footnotesize$v_6$}
\put(1,3){\circle*{1.2}}
\put(1.3,3.7){\footnotesize$v_7$}
\put(20,3){\circle*{1.2}}
\put(19,1.){\footnotesize$v_8$}
\put(22.1,5.9){\circle*{1.2}}
\put(18.8,5.8){\footnotesize$v_9$}
\put(18.7,9.9){\circle*{1.2}}
\put(19.2,9.8){\footnotesize$v_{10}$}
\put(20,14.5){\circle*{1.2}}
\put(18.5,16){\footnotesize$v_{11}$}
\put(15,14.8){\circle*{1.2}}
\put(11,13.5){\footnotesize$v_{12}$}
\put(11.6,18.9){\circle*{1.2}}
\put(12.2,18.6){\footnotesize$v_{13}$}
\put(13.6,21.9){\circle*{1.2}}
\put(14,22.5){\footnotesize$v_{14}$}
\put(33.4,21.9){\circle*{1.2}}
\put(29,22.5){\footnotesize$v_{15}$}
\put(34.4,20.4){\circle*{1.2}}
\put(30.5,20.2){\tiny$v_{16}$}
\put(35.3,19){\circle*{1.2}}
\put(31.7,18.5){\tiny$v_{17}$}
\put(24.9,5.9){\circle*{1.2}}
\put(25.3,5.75){\footnotesize$v_{18}$}
\put(27,3){\circle*{1.2}}
\put(26,1.){\footnotesize$v_{19}$}
\put(46,3){\circle*{1.2}}
\put(41.6,3.7){\footnotesize$v_{20}$}
\put(48,5){\circle*{1.2}}
\put(48.2,5.5){\tiny$v_{21}$}
\put(38.5,19){\circle*{1.2}}
\put(38.7,19.5){\tiny$v_{22}$}
\put(34.5,25){\circle*{1.2}}
\put(34.7,25.5){\tiny$v_{23}$}
\put(24.9,39){\circle*{1.2}}
\put(25.5,38.8){\tiny$v_{24}$}
\put(27,41.9){\circle*{1.2}}
\put(27.5,42){\footnotesize$v_{25}$}
\put(52,5){\circle*{1.2}}
\put(52.5,5){\footnotesize$v_{26}$}
\put(47,0){\circle*{1.2}}
\put(48.1,-0.2){\footnotesize$v_{27}$}

\thicklines
\linethickness{0.15mm}
\qbezier(27,41.9)(20,41.9)(20,41.9)
\qbezier(1,3)(1,3)(0,0)
\qbezier(-1,5)(-1,5)(-5,5)
\qbezier(46,3)(46,3)(47,0)
\qbezier(48,5)(48,5)(52,5)
\qbezier(20,3)(20,3)(27,3)
\qbezier(24.9,5.9)(24.9,5.9)(22.1,5.9)

\qbezier(22.1,39)(22.1,39)(24.9,39)
\qbezier(38.5,19)(38.5,19)(35.4,18.9)
\qbezier(33.4,21.9)(33.4,21.9)(34.5,25)

\qbezier(22.1,5.9)(22.1,5.9)(11.6,18.9)
\qbezier(8.5,19)(8.5,19)(11.6,18.9)
\qbezier(12.5,25)(12.5,25)(13.6,21.9)

\qbezier(20,14.5)(20,14.5)(34.4,20.4)

\thicklines
\linethickness{0.45mm}

\qbezier(0,0)(0,0)(47,0)
\qbezier(0,0)(0,0)(-5,5)
\qbezier(20,41.9)(-5,5)(-5,5)
\qbezier(47,0)(47,0)(52,5)
\qbezier(27,41.9)(52,5)(52,5)
\qbezier(1,3)(1,3)(-1,5)
\qbezier(46,3)(46,3)(48,5)

\qbezier(22.1,39)(22.1,39)(20,41.9)
\qbezier(24.9,39)(24.9,39)(27,41.9)
\qbezier(22.1,39)(22.1,39)(-1,5)
\qbezier(24.9,39)(24.9,39)(48,5)

\qbezier(20,3)(20,3)(1,3)
\qbezier(27,3)(27,3)(46,3)
\qbezier(20,3)(20,3)(22.1,5.9)
\qbezier(27,3)(27,3)(24.9,5.9)

\qbezier(33.4,21.9)(33.4,21.9)(35.4,18.9)
\qbezier(24.5,5.9)(24.5,5.9)(35.4,18.9)
\qbezier(0,0)(0,0)(-5,5)
\qbezier(0,0)(0,0)(-5,5)

\qbezier(33.4,21.9)(33.4,21.9)(13.6,21.9)
\qbezier(13.6,21.9)(13.6,21.9)(11.6,18.9)

\qbezier(22.1,5.9)(22.1,5.9)(18.7,9.9)
\qbezier(11.6,18.9)(11.6,18.9)(15,14.8)
\qbezier(20,14.5)(20,14.5)(18.7,9.9)
\qbezier(20,14.5)(20,14.5)(15,14.8)
\end{picture}

\hspace{2.5cm}

\begin{picture}(40,30)
\put(2,30){$H_0$}
\put(0,0){\circle*{1.2}}
\put(-3.5,0){\footnotesize$u_1$}

\put(20,34.6){\circle*{1.2}}
\put(19,36){\footnotesize$u_2$}
\put(16,13.6){\circle*{1.2}}
\put(12.5,14.1){\footnotesize$u_3$}

\put(20,7.05){\circle*{1.2}}
\put(19.,5.){\footnotesize$u_{4}$}
\put(18,10.325){\circle*{1.2}}
\put(14,9.8){\footnotesize$u_5$}
\put(24,13.6){\circle*{1.2}}
\put(24.5,14.1){\footnotesize$u_6$}

\put(40,0){\circle*{1.2}}
\put(41,0){\footnotesize$u_7$}

\qbezier(24,13.6)(24,13.6)(18,10.325)

\thicklines
\linethickness{0.45mm}

\qbezier(0,0)(0,0)(40,0)
\qbezier(0,0)(0,0)(20,34.6)
\qbezier(0,0)(0,0)(20,7.05)
\qbezier(0,0)(0,0)(16,13.6)

\qbezier(40,0)(40,0)(20,34.6)
\qbezier(40,0)(40,0)(20,7.05)
\qbezier(40,0)(40,0)(24,13.6)

\qbezier(20,34.6)(20,34.6)(16,13.6)
\qbezier(20,34.6)(20,34.6)(24,13.6)

\qbezier(20,7.05)(20,7.05)(16,13.6)
\qbezier(20,7.05)(20,7.05)(24,13.6)
\qbezier(16,13.6)(16,13.6)(24,13.6)

\end{picture}
  \end{floatrow}
\end{center}
\caption{\small\it 
A hamiltonian cycle $C_0=v_0v_1\ldots v_{27}v_0$ in $G_0$ and its corresponding trail
$u_1u_2u_3u_1u_4u_5u_3u_6u_4u_7u_6u_2u_7u_1$ in $H_0$.}
\label{FIG:HamiltonianCycle}
\end{figure}
\end{example}

We return now to our general considerations.
Suppose all $\mathcal{Q}^{c}-$faces of $G$ are either quadrilaterals or 
 hexagons, while the $\mathcal{Q}-$faces are arbitrary.
Suppose the reduced graph $H$ has a triangle $T$ that contains at least 
one vertex in $int(T)$, such that $int(T)$ does not contain a separating 
digon.

We shall successively simplify the inside of the triangle $T$, while 
preserving the property
that there is no separating digon inside of $T$, but allowing the 
presence of separating triangles
inside of $T$, but with the following requirement. 
In what follows we delete loops (but not multiple edges)
 which may arise when contracting a  triangle $T^{'}\subset T$ 
 (such loop may arise
 when $e\in E(T^{'})$ is a multiple edge).
Also, when speaking of a digon or triangle $T^{'}$ not being a face boundary, we mean
that $int(T^{'})$ contains at least one vertex. 

Let $\mathcal{A}$ be the set of all separating triangles in $H$.
Define a relation $\preccurlyeq$ on $\mathcal{A}$ in the following way. 
$T_2 \preccurlyeq  T_1$ if and only if $int(T_2)\subset int(T_1)$, for every $T_1,T_2\in \mathcal{A}$.
This relation is a partial order. 

Suppose $T_1$ and $T_2$ are distinct elements of $\mathcal{A}$ and  
$T_2 \preccurlyeq  T_1$. We say $T_2$ is a {\sf direct successor} of 
$T_1$ if there is no triangle $T_3\in \mathcal{A}$
distinct from $T_1$ and $T_2$ such that $T_2 \preccurlyeq  T_3\preccurlyeq  T_1$.

Note that by planarity, every separating triangle is a direct successor of at most one triangle.

 At all steps in the simplification of the inside of the triangle $T$, we shall require that no triangle $T_1$, $int(T_1)\subseteq int(T)$, 
has three distinct direct successors $T_2$,
$T_2^{'}$, and $T_2^{''}$.
We define the {\sf invariant property} for $T$ to be such that 
$T$ and every triangle inside of $T$ has at most two distinct direct successors and
there is 
no separating digon 
inside of $T$.
In particular, any bounded facial triangle has the invariant property.
Note that if the triangle $T$ has the invariant property, then every triangle  inside of $T$ also has the invariant property (this is sort of a ``relative hereditary property'').
We say that a graph $H$ has the invariant property if every triangle in $H$ (and the outer face of $H$ if it is a triangle) satisfies
the invariant property.

The following theorem is of a more technical nature and is key to the subsequent results.

\begin{theorem}~$(${\rm\cite[Lemma 1]{Feder}}$)$
\label{LE:1}
Let $G,\mathcal{Q},$ and $H=G/ \mathcal{Q}$  be as stated in $(\bf H)$ and let $T\subset H$
be a triangle containing at least two vertices inside. If  $T$ satisfies the invariant property, then it
is possible to select a triangular face $T^{'}$ such that 
$int(T^{'})\subset int(T)$ 
and $|V(T)\cap V(T^{'})|\le 1$,
and after contracting $T^{'}$ to a single vertex
  $T$ will still satisfy the invariant property.
\end{theorem}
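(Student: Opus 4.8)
The plan is to argue by working with the poset $(\mathcal{A}, \preccurlyeq)$ of separating triangles inside $T$ and selecting a $\preccurlyeq$-minimal candidate in a controlled way. First I would observe that since $T$ satisfies the invariant property, $T$ has at most two distinct direct successors among the separating triangles inside it. The key case split is on how many direct successors $T$ has: zero, one, or two. If $T$ has no separating triangle inside it, then $int(T)$ contains at least two vertices but no separating triangle and no separating digon, so the triangulation-like structure of $H$ inside $T$ forces the existence of a bounded \emph{facial} triangle $T'$ with $int(T')\subset int(T)$; here the absence of separating digons is what guarantees that $T'$ meets $T$ in at most one vertex (if $T'$ shared an edge $e$ with $T$, the third vertex of $T'$ together with $T$ would be forced into a configuration producing a separating digon or a second separating triangle, contradicting minimality/the invariant property). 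After contracting such a facial $T'$, no new triangles are created inside $T$ and no digons appear, so the invariant property is preserved trivially.

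Next, if $T$ has exactly one direct successor $T_1$, I would recurse into $T_1$: since $T_1$ also satisfies the invariant property (the ``relative hereditary'' remark), and since we may assume $int(T_1)$ contains at least two vertices — otherwise $int(T_1)$ has at most one vertex and we can contract $T_1$ itself, checking that this does not raise any triangle's direct-successor count above two and creates no separating digon — we apply the argument recursively to $T_1$ in place of $T$. The main work is in the case where $T$ has two direct successors $T_1$ and $T_2$. Here I would look at $T_1$ (say): either $int(T_1)$ contains at most one vertex, in which case $T_1$ is nearly a face and I contract it directly after verifying the invariant property survives (the delicate point being that after contraction $T$ could acquire a new direct successor, but since $T_1$ is being absorbed and had no separating triangle strictly between it and $T$, the count stays $\le 2$); or $int(T_1)$ has at least two vertices and we recurse into $T_1$. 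In every branch the recursion strictly decreases $|int(\cdot)\cap V(H)|$, so it terminates, and it terminates precisely at a facial triangle $T'$ with $|V(T)\cap V(T')|\le 1$.

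The step I expect to be the main obstacle is verifying that contracting the selected $T'$ preserves the invariant property \emph{for $T$ and all triangles between $T'$ and $T$} — specifically, ruling out that the contraction of $T'$ to a point creates a separating digon, or gives some intermediate triangle a third direct successor. The potential digon can only arise from two edges that, before contraction, formed a path of length two through a vertex of $T'$ bounding a region with vertices inside; I would argue that such a configuration would already have violated the invariant property of $T$ (either via a pre-existing separating digon or via an extra direct successor), so it cannot occur. For the direct-successor count, the point is that contracting $T'$ can merge the "inside" regions of triangles only in ways that replace several successors by one, never increasing the count; a careful planarity argument (each separating triangle is a direct successor of at most one triangle, as already noted) handles this. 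Throughout, I would also have to be careful about the bookkeeping conventions stated before the theorem — deleting loops that arise when $e\in E(T')$ is a multiple edge, and treating parallel edges as a single edge — since these are exactly what make "facial triangle" and "separating digon" well-defined after contraction.
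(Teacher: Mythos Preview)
Your recursion does eventually reach the same situation the paper starts from: a separating triangle $T_1$ (possibly $T$ itself) whose interior contains vertices but no separating triangle and no separating digon. The genuine gap is what happens next. You assert that after contracting a facial triangle $T'$ inside such a $T_1$, ``no new triangles are created inside $T$ and no digons appear, so the invariant property is preserved trivially.'' This is false: any quadrilateral of $H$ that shares an edge with $T'$ becomes a triangle after the contraction, and if that quadrilateral had interior vertices it becomes a \emph{new separating triangle}. Thus a $T_1$ that had zero direct successors before contraction can acquire direct successors afterwards; the invariant property is not automatic.

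This is exactly where the paper spends almost all of its effort. In Case~1 they write $T'=v_1v_4v_5v_1$ with $v_1\in V(T_1)$ and then classify every quadrilateral that can become a separating triangle: three families $Q_1=v_1v_4v_6v_7v_1$, $Q_2=v_1v_5v_8v_9v_1$, $Q_3=v_4v_5v_{10}v_{11}v_4$, each further split into kinds according to which vertex of $T'$ lies inside. They argue (using the absence of separating triangles or digons inside $T_1$) that within each kind the quadrilaterals are totally ordered by containment, determine the outermost one of each kind, and then prove case by case that certain pairs cannot coexist, so that $T_1$ acquires at most two direct successors and every new triangle below it has at most two as well. A parallel analysis handles Case~2, where every minimal separating triangle has only one interior vertex. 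None of this is captured by your claim that contraction ``can merge the `inside' regions of triangles only in ways that replace several successors by one, never increasing the count''; the count can increase, and bounding the increase by two is the theorem's real content. You also need to justify the choice of \emph{which} facial triangle to contract (the paper chooses $v_4,v_5$ consecutive in $O^+(v_1)$ precisely so that the $Q_1$-family is of only one kind), whereas you treat any facial $T'$ as acceptable.
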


\begin{proof}{
Suppose  $T$ satisfies the invariant property.
Let $\mathcal{D}$ be the set of all separating triangles $T^{'}$ inside of $T$ 
such that no triangle inside of $T^{'}$ is separating. That is, $T^{'}$ 
has no direct successors. 
Moreover, $int(T^{'})$ does not contain a separating digon because 
$T$ satisfies the invariant property by supposition.
Observe that $\mathcal{D}\subset \mathcal{A}$, and $T^{'}\in \mathcal{D}$
 corresponds to a sink in the  Hasse diagram of 
 $(\mathcal{A},\preccurlyeq)$. We have two cases.\\
 
\noindent
{\bf Case 1.} There exists a triangle $T_1\in \mathcal{D}$
whose interior contains at least two vertices. 
Set $T_1 = v_1v_2v_3v_1$.\\

In this case,
$v_1$ has at
least two distinct neighbors $v_4$ and $v_5$ inside of $T_1$.
 For if $v_1$ has no such neighbors,
then $v_1$ belongs to a triangle inside of $T_1$ that has an edge 
$v_2v_3$ parallel to the corresponding edge of $T_1$,
contrary to the assumption that there is no separating 
digon inside of $T$ (since $T$ satisfies the invariant property). And if $v_1$
has precisely one such neighbor $v_4$ inside of $T_1$, then for 
$v_4\in N(v_1)\setminus \{v_2,v_3\}$, the triangle
$v_2v_3v_4v_2\subset  int(T_1)$  is separating, contrary to the choice of $T_1\in \mathcal{D}$.

We may then choose $v_4$ and $v_5$ so that $v_2,v_4,v_5$ are consecutive
neighbors of $v_1$, and  contract the triangle $T_2=v_1v_4v_5v_1$.

\begin{claim}\label{clm:1}
By contracting  $T_2$, the  triangle $T$ still satisfies the invariant property.
\end{claim}

Set $H^{'}=H/{T_2}$. We note that in $H^{'}$, the triangle
$T_1$ will not contain any separating digon inside
since such a digon would derive from a  separating triangle inside $T_1$, contrary to 
the choice of $T_1\in \mathcal{D}$.

Next we  show that after the contraction of $T_2$, the triangle $T_1$ has at most two direct successors in $H^{'}$.

In $H^{'}$  however, there may appear separating 
triangles in $int(T_1)$. Such triangles derive from quadrilaterals 
$Q_1=v_1v_4v_6v_7v_1$, $Q_2=v_1v_ 5v_8v_9v_1$, and 
$Q_3=v_4v_5v_{10}v_{11}v_4$ in $H$, which contain some vertices inside other than
$v_4,v_5$. 
 Note that, there is no quadrilateral $Q$ in $H$ containing two edges of $T_2$ and containing a vertex $x \in int(Q)$. Otherwise, such a $Q$ would imply the existence of a separating triangle inside of $T_1$, contrary to the choice of $T_1\in \mathcal{D}$.

 If $v_2=v_6$, then  $v_3=v_7$, since there is no separating triangle inside of $T_1$. In this case, the triangle deriving from $Q_1=v_1v_4v_6v_7v_1$ is
 $T_1$, since we treat parallel edges as a single edge. Thus, suppose that $v_2\neq v_6$.  Moreover, we have  
$v_2\neq v_7\neq v_5\neq v_6$,  $v_8\neq v_4\neq v_9,$ and $v_{11}\neq v_1$, respectively; otherwise we would have 
a separating digon in $int(T_1)\cap H$
which contradicts 
the invariant property of $T$ (since $T_1 \preccurlyeq T$), or a separating triangle in $int(T_1)\cap H$, 
 contradicting the choice of $T_1\in \mathcal{D}$.

The
quadrilaterals of the same type as $Q_2$ are of two kinds:  
first, $int(Q_2)$  contains $v_4$ (in which case $v_9=v_2$), or
else $int(Q_2)$ does  not 
contain $v_4$ in which case it cannot have chords $v_1v_8$ or $v_5v_9$ inside; otherwise, there was 
a separating triangle in $H$ inside of $T_1$, again a 
contradiction to the choice of $T_1\in \mathcal{D}$.
 This implies that for all such quadrilaterals $Q_2^{*}$ and 
 $Q_2^{**}$ containing $v_4$ we have either $Q_2^{*}\subset Q_2^{**}$
or $Q_2^{**}\subset Q_2^{*}$, and the same holds for quadrilaterals not 
containing $v_4$. Hence, let $Q_2^{'}$ be the quadrilateral not containing $v_4$, but  all 
other quadrilaterals of its kind are
contained in $int(Q_2^{'})$; and let $Q_2^{''}$ be the quadrilateral  containing $v_4$, but all other quadrilaterals  of its kind 
are contained in $int(Q_2^{''})$.

The analogous properties hold for the quadrilaterals of the same type as
$Q_1$, but these are 
of only one kind, namely the interior of $Q_1$ containing $v_5$, 
otherwise $v_7 = v_2$,  
contrary to what has been said above.
Let $Q_1^{'}$ be the quadrilateral of the same type as $Q_1$ 
containing $v_5$ and with all  quadrilaterals 
of the same type as $Q_1$ contained in $int(Q_1^{'})$.

The analogous properties also hold for the quadrilaterals of the same 
type as $Q_3$, but 
these are again of
only one kind, namely the interior of $Q_3$ does not contain $v_1$, since they are contained in the triangle $T_1$. Let $Q_3^{'}$ be the quadrilateral not containing $v_1$ but 
with all  quadrilaterals of the same type as $Q_3$ contained in $int(Q_3^{'})$.

That is, $Q_1^{'},Q_2^{'},Q_2^{''}$, and $Q_3^{'}$ are the respective outermost quadrilaterals of their respective kinds.

Let $T_Q\subset H^{'}$ be the triangle deriving from the quadrilateral 
$Q\in \{Q_1^{'},Q_2^{'},Q_2^{''},Q_3^{'}\}$ after contraction of $T_2$ to 
the single vertex $v^{*}$ where $Q_1^{'}=v_1v_4v_6^{'}v_7^{'}v_1$, 
$Q_2^{'}=v_1v_5v_8^{'}v_9^{'}v_1$,
$Q_2^{''}=v_1v_5v_8^{''}v_9^{''}v_1$, and
$Q_3^{'}=v_4v_5v_{10}^{'}v_{11}^{'}v_4$.

Assume that $Q^{'}_1, Q^{'}_2, Q^{''}_2,$  and $Q^{'}_3$ exist. We first
show that $T_{Q_2^{'}} \preccurlyeq  T_{Q_1^{'}}$ and symmetrically,
$T_{Q_3^{'}} \preccurlyeq  T_{Q_1^{'}}$ and $T_{Q_3^{'}} \preccurlyeq  T_{Q_2^{''}}$ (defining the partial order $\preccurlyeq$ in $H^{'}$ as we did in $H$).  
Subsequently we shall conclude that at most one of $Q^{'}_1$ and $Q^{''}_2$ exist.

Suppose  $int(Q_2^{'})\nsubseteq  int(Q_1^{'})$. Since 
$v_5\in int(Q_1^{'})$,  it follows from the supposition that $v_9^{'}\in ext(Q_1^{'})$
(observe that above we concluded $v_6\ne v_5\ne v_7$). Therefore, we have two possibilities for $v_8^{'}$.

\begin{description}
\item[$\bf (1)$] $v_8^{'}=v_6^{'}$.

In this case $v_7^{'}\in int(Q_2^{'})$, since 
 $v_9^{'}\in ext(Q_1^{'})$. Therefore,  the quadrilateral $v_1v_4v_6^{'}v_9^{'}v_1$ contains
properly $Q_1^{'}$, contradicting the definition of $Q_1^{'}$.

\item[$\bf (2)$]  $v_8^{'}=v_7^{'}$.

In this case  there is a separating triangle in $int(T_1)\cap H$ which is either $v_1v_5v_7^{'}v_1$
or $v_1v_7^{'}v_9^{'}v_1$; this contradicts the choice of 
$T_1\in \mathcal{D}$.
\end{description}

Thus, $int(Q_2^{'})\subseteq  int(Q_1^{'})$. Therefore,
$T_{Q_2^{'}} \preccurlyeq  T_{Q_1^{'}}$.

Note that, $int(T_2) \subseteq int(Q_1^{'})$. 
Suppose $int(Q_3^{'}) \subseteq int(Q_1^{'})$ does not hold. Since $Q_1^{'}$ 
is the outermost quadrilateral of its kind the cases $v_6^{'}= v_{11}^{'}$ or $v_7^{'}= v_{10}^{'}$ cannot happen; therefore,  either $v_6^{'} = v_{10}^{'}$ yielding two triangles  $v_4v_{10}^{'}v_{11}^{'}v_4$ and $v_4v_5v_{10}^{'}v_4$ at least one of which is a separating triangle in $H$;
or $v_7^{'} = v_{11}^{'}$, yielding a  triangle  $v_1v_4v_{11}^{'}v_1$  which is separating in $H$.
Each of the above cases yields a separating triangle in $int(T_1)$ contradicting the choice of $T_1$. Thus $int(Q_3^{'})$ does not contain any vertex or edge of $Q_1^{'}$, hence $int(Q_3^{'}) \subseteq int(Q_1^{'})$  and thus 
$T_{Q_3^{'}} \preccurlyeq   T_{Q_1^{'}}$. Likewise we conclude that $T_{Q_3^{'}} \preccurlyeq  T_{Q_2^{''}}$.

Now consider $Q_1^{'}$ and $Q_2^{''}$. In general, we have  
$\{v_1,v_2,v_5,v_6^{'}\}\subseteq N(v_4)$.
Since $v_9^{''}=v_2$, we have two possibilities for $v_8^{''}$ as we had with respect to 
$v_8^{'}$ above. 
\begin{description}
\item[$\bf (1)$] $v_8^{''}=v_6^{'}$.

 In this case there is a separating
triangle 
$v_1v_2v_4v_1$ or $v_2v_4v_6^{'}v_2$ or $v_4v_5v_6^{'}v_4$
in $int(T_1)\cap H$; this
contradicts the choice of $T_1\in \mathcal{D}$.

\item[$\bf(2)$] $v_8^{''}=v_7^{'}$.

In this case  we have a separating 
 triangle $v_1v_2v_8^{''}v_1$ in $int(T_1)\cap H$; this cotradicts the choice of 
$T_1\in \mathcal{D}$.
\end{description}
Thus, at most one of $Q_1^{'}$ and $Q_2^{''}$ exists. Therefore,

$\bullet$ {the above relations  
$T_{Q_2^{'}} \preccurlyeq  T_{Q_1^{'}}$,
$T_{Q_3^{'}} \preccurlyeq  T_{Q_1^{'}}$, $T_{Q_3^{'}} \preccurlyeq  T_{Q_2^{''}}$ and the fact that at most one of $Q_1^{'}$ and $Q_2^{''}$ exists preclude that $T_1$ has three or more direct successors in $H^{'}$;} 

$\bullet$ {if $T_1$ has exactly one direct successor $T_Q$ in $H^{'}$, then 
$T_Q\in \{T_{Q_1^{'}},T_{Q_2^{'}},T_{Q_2^{''}},T_{Q_3^{'}}\}$;}

$\bullet$ {if $T_1$ has two direct successors in 
$H^{'}$, then they are either
$T_{Q_2^{'}}$ and $T_{Q_2^{''}}$, or $T_{Q_2^{'}}$ and $T_{Q_3^{'}}$.}
\\

Note that every triangle deriving from a quadrilateral of the same type 
as $Q_2$ not containing $v_4$, or
every triangle deriving from a quadrilateral of the same type 
as  $Q_3$ has at most one direct successor
deriving from a quadrilateral of its respective type.\\

Every triangle deriving from a quadrilateral of the same type 
as $Q_2$  containing $v_4$  has at most one direct successor
deriving from either a quadrilateral of its type or a quadrilateral of the same  type as $Q_3$.\\

Every triangle deriving from a quadrilateral of the same type 
as $Q_1$  containing $v_5$  has either at most one direct successor
(deriving from either a quadrilateral of its type or from a quadrilateral of the same  type as $Q_2$ not containing $v_4$, or from a quadrilateral of the same  type as $Q_3$) or 
at most two direct successors deriving from  two quadrilaterals,
one
 of the same  types as $Q_2$ not containing $v_4$ and one of the same  type as $Q_3$.\\

Thus, $T$ still satisfies the invariant property in $H^{'}$.
This finishes the proof of Claim~\ref{clm:1} and thus finishes the
consideration of Case 1. \\

\noindent
{\bf Case 2} The interior of every member of $\mathcal{D}$ contains 
precisely one vertex.\\

In this case, there is a triangle $T_1$ (possibly $T_1=T$)
satisfying the invariant property and such
that either it has one direct successor $T_2\in \mathcal{D}$ or
it has  two direct successors 
 $T_2,T_3\in \mathcal{D}$. That is, there exist at most two separating triangles $-T_2$ or $T_2$ and $T_3-$ in $int(T_1)\cap H$. 
Thus, 
\\

{\it if a triangle inside of $T_1$ different from $T_2$ and $T_3$ shares an edge with some
$T_i$, $i\in \{1,2,3\}$, then it is a face boundary. \hfill $(*)$}\\

\noindent
{\bf Subcase 2.1} $E(T_i)\cap E(T_j)=\emptyset$ for $i,j\in \{1,2,3\}$, 
$i\neq j$, and thus $|V(T_i)\cap V(T_j)|\le 1$ since we treat parallel edges as a single edge.\\

By contracting    any triangle inside of $T_2$,
we will not  create a separating digon since
such a digon  would derive from a
separating triangle $T_0$ 
 in $int(T_1)\cap  H$  with  $int(T_0)\cap int(T_2)=\emptyset$ (since $T_2$ is a direct successor of $T_1$) and
$|V(T_0)\cap V(T_2)|=2$; so by the assumption of Subcase 2.1,
$T_0$ is distinct from $T_1$
and $T_3$.
Thus, $T_1$ 
would contain a 
distinct direct successor other than $T_2$ and $T_3$ in $H$, contradicting the choice of $T_1$. \\

\noindent
{\it Note that if a quadrilateral $Q\subset H$ shares two edges 
with $T_2$, then the contraction of  any 
triangle inside of $T_2$  
will not  transform $Q$ into a  separating triangle or a  separating digon
$($otherwise, either $T_1$ would contain in $H$ a distinct separating triangle other than $T_2$ and $T_3$ in $H$ or $T_2$ would not be a direct successor of $T_1$ in $H$- which contradicts the choice of $T_1$ or $T_2)$. \hfill $(**)$}\\

Let $v_0$ be the single vertex inside of $T_2 = v_1v_2v_3v_1$. We must 
again consider quadrilaterals 
$Q_1=v_1v_2v_4v_5v_1$, $Q_2=v_1v_3v_6v_7v_1$, and $Q_3=v_2v_3v_8v_9v_2$
which contain at least one vertex inside other than $v_0$, and
such that $\{v_1,v_2,v_3\}\cap\{v_4,\ldots,v_9\}=\emptyset$.
This equation derives from $(**)$ above and from the invariant property. Moreover, $int(Q_i)\subseteq int(T_1), i=1,2,3$
(otherwise, either $T_1$ would contain a distinct separating triangle  other than $T_2$ and $T_3$ in $H$ - which contradicts the choice of $T_1$ -
or the contraction of any triangle in $int(T_2)$ would not transform that $Q_i$ into a separating digon or separating triangle inside of $T_1$).

\begin{claim}\label{clm:2}
There  cannot exist simultaneously four
 quadrilaterals $Q_i,\ 1\le i\le 3,$ and $Q_1^{'}=v_1v_2v_4^{'}v_5^{'}v_1$
 where $v_{4-i}\in int(Q_i)$ and $v_3\notin int(Q_1^{'})$ but $int(Q_1^{'})\cap V(H)\neq \emptyset$. 
\end{claim}

Supposing that Claim 2 fails and starting with a fixed $Q_1$, we consider all possibilities for $Q_2$ vis-a-vis $Q_1$.

Suppose $v_6=v_5$. Then the triangle $T^{*}=v_1v_5v_7v_1$ is separating.
Since $T_2\subset int(T^{*})$, and $T_2$ is a direct successor of $T_1$ by the choice of $T_1$ and $Q_i \subset T_1$, for $1\le i\le 3$, we have $T^{*}=T_1$.
Thus, $v_1$ is a vertex of $T_1$ and,  consequently,  $v_1\notin int(Q_3)$, which is a contradiction.

Suppose $v_7=v_4$. Then, an analogous reasoning yields
$T_1=v_1v_4v_5v_1$ and the same conclusion as above holds $(v_1\notin int(Q_3))$.

Suppose $v_7=v_5$. Since $T_2\subset v_1v_5v_1$, therefore  $v_1v_5v_1$ is a separating digon
which contradicts the invariant property of $T_1$.

Suppose $v_6=v_4$. If $v_8=v_7$, then $v_3v_6v_8v_3$ would be a separating triangle in $int(T_1)$. Thus, $v_8\neq v_7$. Therefore, $v_9=v_7$. If $v_8=v_4$, then $v_4v_7v_4$ would be a separating digon in $int(T_1)$ (since $v_1\in int(Q_3)$,) which is a contradiction. Thus, $v_8=v_5$. Since $T_2$ is a direct successor of $T_1$ and 
$v_4v_5v_7v_4 \preccurlyeq T_1$ is a separating triangle containing $T_2$, we have $T_1=v_4v_5v_7v_4$, and either

 \begin{description}
\item[i)] the quadrilateral 
$Q_1^{'}$ would be 
 inside the triangle $v_1v_2v_7v_1$ which is a face boundary by $(*)$ 
 (since it shares the edge $v_1v_2$ with $T_2$), and this is 
impossible; or

\item[ii)] $v_5^{'}$ would be inside  the triangle 
$v_1v_5v_7v_1$ which   shares the edge $v_5v_7$ with $T_1$. Thus by $(*)$, the triangle 
$v_1v_5v_7v_1$ is a face boundary and this is a contradiction again; or

\item[iii)]  $v_4^{'}=v_7=v_9$ and 
$v_5^{'}=v_5=v_8$. Note that the quadrilateral 
$Q_1^{'}=v_1v_2v_4^{'}v_5^{'}v_1$ contains some vertex inside;
on the other hand  $v_1v_4^{'}$ devides  $Q_1^{'}$ into two triangles  each of which shares an edge with $T_1$ or $T_2$. Thus by $(*)$, $int(Q_1^{'})\cap V(H)=\emptyset$,  which contradicts the supposed existence of $Q_1^{'}$. 
\end{description}
Claim~\ref{clm:2} now follows.

 Therefore, 
 we may assume
 without loss of generality
that either there is no quadrilateral $Q_1$ containing $v_3$ in its 
interior, or there is
no quadrilateral $Q_1^{'}$ not
containing $v_3$ in its 
interior such that after identifying $v_1$
and $v_2$ a new separating
triangle  arises. 

The contraction of the 
triangle $v_0v_1v_2v_0$  creates only a sequence of triangles with pairwise containment 
involving the new vertex
$v_1\equiv v_2$, apart from the triangle $T_3$, thus preserving the 
property that 
no triangle has three direct successors. Having shown at the begining of 
this subcase that the contraction of $v_0v_1v_2v_0$ does not
create a separating digon, we now conclude that the invariant property is being preserved.  \\

\noindent
{\bf Subcase 2.2} $T_2$  shares one edge with $T_1$
where $T_2=v_1v_2v_3v_1$ as before with $int(T_2)\cap V(H)=\{v_0\}$.
Without loss of generality $E(T_1)\cap E(T_2)=\{v_1v_3\}$.\\

In this subcase $|E(T_2)\cap E(T_3)|\le 1$;
otherwise $E(T_2)\cap E(T_3)=\{v_1v_2,v_2v_3\}$ and then the third edge of $T_3$ would be an edge parallel to $v_1v_3$ in $T_1\setminus int(T_2)$. Since  $T_3$ is a direct successor of $T_1$ and $int(T_3)\cap V(H)\ne \emptyset$ we would have the  separating digon $v_1v_3v_1$ in $int(T_1)$,
a contradiction to the invariant property satisfied by $T_1$. Therefore, without loss of generality suppose that $v_1v_2\notin E(T_3)$.

As before we consider the quadrilateral $Q_1=v_1v_2v_4v_5v_1$ with $(int(Q_1)\setminus \{v_0\})\cap V(H)\neq \emptyset$ and $v_3\notin int(Q_1)$.

If $v_3=v_5$, then
 in $H$, either the digon $v_1v_3v_1$  is a separating digon in $T_1$, which is a contradiction,
 or the triangle $v_2v_3v_4v_2\preccurlyeq T_1$ is separating 
(because there is no separating triangle inside $T_2$), so $T_3=v_2v_3v_4v_2$.
Thus,  the contraction of $v_0v_1v_2v_0$  yields a sequence of triangles with pairwise 
containment involving $v_1 = v_2$ and not containing $v_3$ inside nor in their vertex sets. Therefore, the triangle  $T_1$ has 
at most one direct successor  in $H^{'}$ other than $T_3$.

Thus, suppose $v_3\neq v_5$.
If $v_3=v_4$, then 
there cannot be a separating digon $v_2v_3v_2$, hence
the triangle 
  $v_1v_3v_5v_1$ is separating; and because of 
  $T_2\preccurlyeq v_1v_3v_5v_1 \preccurlyeq T_1$, we have $T_1=v_1v_3v_5v_1$. Thus in this case, contracting the triangle $v_0v_1v_2v_0$ will not  transform $Q_1$ into a  direct successor of $T_1$ in $H^{'}$.
  Therefore assume that $v_3\neq v_4$.

The contraction of the triangle $v_0v_1v_2v_0$ 
creates only a sequence of triangles
with pairwise containment involving the new vertex $v_1 \equiv v_2$,
apart from  $T_3$, thus preserving the invariant property.\\

\noindent
{\bf Subcase 2.3} $T_2$ 
shares the edge $v_1v_3$ with $T_3$, and $E(T_1)\cap E(T_i)=\emptyset,\ 
i=2,3$.\\

 In this subcase, every quadrilateral $Q_1=v_1v_2v_4v_5v_1$ 
containing $v_3$ in its interior also contains $T_3$. So the contraction of $v_0v_1v_2v_0$  yields two sequences of triangles with pairwise 
containment involving $v_1 \equiv v_2$: one
sequence containing $v_3$ (and also $T_3$) inside and the other sequence not containing 
$v_3$ inside, again 
preserving the property
that no triangle has three direct successors. Theorem~\ref{LE:1} now follows.
}\end{proof}

Proposition~\ref{PR:2} generalizes Proposition 2 in~\cite{Feder}.
\begin{proposition}
\label{PR:2}
 Suppose  $G$  is a $3-$connected cubic planar graph with 
 a facial $2-$factor $\mathcal{Q}$. Assume that the faces not in $\mathcal{Q}$ are either quadrilaterals or 
 hexagons, while the  faces in $\mathcal{Q}$ are
arbitrary. Suppose the reduced graph $H=G/\mathcal{Q}$  satisfies the invariant property, and  that the 
outer face of $H$ is a triangle. If $H$ has an odd number
of vertices, then $H$ has a spanning tree of faces that are triangles, and so $G$ is hamiltonian.
\end{proposition}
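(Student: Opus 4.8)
The plan is to iterate Theorem~\ref{LE:1}, contracting triangular faces until $H$ shrinks to a single triangle, and then to undo the contractions one by one while carrying along a spanning tree of triangular faces. A preliminary remark sets the stage: since $G$ is cubic and $\mathcal{Q}$ a facial $2$-factor, the edges lying on no $\mathcal{Q}$-face form a perfect matching, so every $\mathcal{Q}^{c}$-face alternates between $\mathcal{Q}$- and $\mathcal{Q}^{c}$-edges and hence has even length. Contracting the $\mathcal{Q}$-faces identifies the two ends of every $\mathcal{Q}$-edge, so a quadrilateral $\mathcal{Q}^{c}$-face becomes a digon --- hence vanishes, parallel edges being treated as one --- and a hexagonal $\mathcal{Q}^{c}$-face becomes a triangle (or, if two of its $\mathcal{Q}$-edges lie on one $\mathcal{Q}$-face, again a digon). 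Thus every bounded face of $H$ is a triangle, and contracting a triangular face of a plane triangulation again yields a plane triangulation (the three faces bordering the contracted face become digons). So $H$ and every graph obtained from it by triangular-face contractions is a plane triangulation with a triangular outer face; in particular, for all these graphs a spanning tree of faces is automatically one whose faces are triangles, and no quadrilateral face is ever present.

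\emph{The reduction.} Let $T$ be the outer triangle of $H$. Being the outer face of $H$, $T$ satisfies the invariant property by hypothesis, and since every triangle of $H$ distinct from $T$ lies inside $T$, the invariant property holds throughout $int(T)$. As long as $int(T)$ contains at least two vertices, Theorem~\ref{LE:1} supplies a triangular face $T'$ with $int(T')\subset int(T)$ and $|V(T)\cap V(T')|\le 1$ whose contraction preserves the invariant property of $T$; because $|V(T)\cap V(T')|\le 1$, contracting $T'$ leaves $T$ a triangular outer face, and it replaces three vertices by one, so $|V(H)|$ drops by $2$ and keeps its parity. Iterating, we reach a graph in which $int(T)$ has at most one vertex; since the order has stayed odd and equals $3+|int(T)\cap V|$, it must be exactly $3$, i.e.\ the graph is $K_{3}$. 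For $K_{3}$ the singleton $\mathcal{T}=\{\text{bounded face}\}$ is a spanning tree of faces: its restricted radial graph is the star $K_{1,3}$, a tree, and the outer face is excluded.

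\emph{The reconstruction.} Now undo the contractions one at a time. Suppose $H_{i}=H_{i-1}/T'$, with the triangular face $T'=xyz$ of $H_{i-1}$ contracted to a vertex $w$, and suppose $H_{i}$ has a spanning tree of triangular faces $\mathcal{T}_{i}$ omitting its outer face. I claim $\mathcal{T}_{i-1}:=\mathcal{T}_{i}\cup\{T'\}$, each face of $\mathcal{T}_{i}$ read back in $H_{i-1}$, works for $H_{i-1}$. The key point is that no member of $\mathcal{T}_{i}$ contains two vertices of $T'$ in $H_{i-1}$: a triangular face of $H_{i-1}$ through two vertices of $T'$ contains the $T'$-edge joining them (they are adjacent), hence is one of the three faces bordering $T'$; but those faces become digons when $T'$ is contracted, so they are not faces of $H_{i}$ and a fortiori not in $\mathcal{T}_{i}$. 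Consequently each $F\in\mathcal{T}_{i}$ meets $T'$ in at most one vertex, stays a triangle in $H_{i-1}$, and shares no edge with $T'$; the members of $\mathcal{T}_{i-1}$ remain pairwise edge-disjoint; every vertex of $H_{i-1}$ lies on a member of $\mathcal{T}_{i-1}$ ($x,y,z$ on $T'$, the rest as before); and $\mathcal{R}(V(H_{i-1}),\mathcal{T}_{i-1})$ is obtained from the tree $\mathcal{R}(V(H_{i}),\mathcal{T}_{i})$ by deleting the node $w$, adjoining the star centred at $T'$ with leaves $x,y,z$, and reattaching each face-node formerly adjacent to $w$ to the unique one of $x,y,z$ it now contains --- an operation that preserves connectedness and acyclicity. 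Hence $\mathcal{T}_{i-1}$ is a spanning tree of triangular faces of $H_{i-1}$ omitting its outer face. Unwinding back to $H_{0}=H$ gives the desired spanning tree of triangular faces of $H$, and then $G$ is hamiltonian by Proposition~\ref{PR:1}.

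The main obstacle is the reconstruction step, and within it the ``key point'': one must exclude a member of $\mathcal{T}_{i}$ straddling two vertices of the contracted triangle. This is precisely where the hypothesis that the $\mathcal{Q}^{c}$-faces are quadrilaterals or hexagons enters --- it forces $H$, and with it every contraction, to be a plane triangulation, so that ``two vertices of $T'$'' already entails ``an edge of $T'$'', which kills the offending face --- together with the bound $|V(T)\cap V(T')|\le 1$ from Theorem~\ref{LE:1}, which both keeps the outer face a triangle throughout and keeps $T'$ (hence the faces bordering it) away from the outer boundary. Once the key point is secured, the radial-tree surgery, the parity bookkeeping, and the base case $K_{3}$ are all routine.
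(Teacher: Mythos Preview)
Your argument is correct and follows the paper's approach: iterate Theorem~\ref{LE:1} to contract bounded triangular faces (each contraction dropping $|V|$ by $2$, parity forcing the process to terminate at the bare outer triangle) and take the contracted triangles together with the final bounded face as $\mathcal{T}$. The only cosmetic difference is in verifying that $H_{\mathcal{T}}$ is a tree: the paper argues by contradiction --- a cycle among the chosen faces would mean the last-contracted member of that cycle had already collapsed to a digon before its turn, contrary to the selection made by Theorem~\ref{LE:1} --- whereas you unwind the contractions one by one and check explicitly that the restricted radial graph stays a tree.
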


\begin{proof} Let $T$ be the outer face of $H$. Apply Theorem~\ref{LE:1} repeatedly to contract triangular 
bounded  faces inside of $T$ to single vertices while
preserving the invariant property. Each step reduces the number of vertices by two, so
at each step the order of the resulting graph remains odd until we are left with just the outer face $T$
and parallel edges but no vertex in $int(T)$. 
We claim that  the triangle corresponding to the
innermost face $T_0$ inside of $T$
involving all three
vertices together with the  triangles contracted in this process forms
a set of faces $\mathcal{T}$ of $H$ and defines
a spanning tree of faces $H_{\mathcal{T}}$.
Now, 
$$V(H)\setminus V(T)\subset \bigcup_{F\in \mathcal{T}-T_0} V(F) $$
guarantees that $\mathcal{T}$ covers all of $V(H)$, and 
$H_{\mathcal{T}}$ is connected by construction.

 If $H_{\mathcal{T}}$ is not a spanning tree of faces, then there 
exists a set of triangles  $\{T_1,\ldots,T_k\}$ $\subset \mathcal{T}$   
such that $|V(T_i)\cap V(T_j)|=1$ if $j=i\pm 1$, counting modulo $k$, and 
$V(T_i)\cap V(T_j)=\emptyset$ otherwise. Assume $T_{i_0}$ is 
the last contracted triangle in the contraction process of the $T_i$'s, $1\le i\le k$.
Thus after the contraction of $T_i$ for all $1\le i\neq i_0\le k$, $T_{i_0}$ 
is being transformed into a digon. This contradicts the selection of $T_{i_0}$ by 
Theorem~\ref{LE:1}.
Proposition~\ref{PR:2} now follows.
\end{proof}

We note in passing 
that by using Lemma~\ref{LEM:cyclic} below and Theorem~\ref{TH:Lf-3-conn.}(ii), Theorem~\ref{TH:LF-ham.} 
can be shown to be a special case
of Proposition~\ref{PR:2}.
Moreover, let $G$ be a $3-$connected cubic graph and as described in Theorem~\ref{TH:vertexenvelope}.  Then $H=G/\mathcal{Q}$ is a   triangulation of the plane. Thus,
by
Proposition~\ref{PR:2} and Theorems~\ref{TH:Lf-3-conn.}(ii) and~\ref{TH:vertexenvelope}, we have the following corollary.
\begin{corollary}\label{cor:2-conn.}
Let $G_0$ be a  simple $2-$connected  cubic planar
graph of order $n \equiv 2\pmod{4}$
and let $\mathcal{Q}$ be the set of faces of
$Lf(G_0)$ corresponding to faces of $G_0$. If $Lf(G_0)/\mathcal{Q}$ satisfies the invariant property,
then $Lf(G_0)$ is hamiltonian.
\end{corollary}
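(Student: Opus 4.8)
}
The plan is to reduce the statement directly to Proposition~\ref{PR:2} together with the two quoted structural facts about $Lf$. First I would invoke Theorem~\ref{TH:Lf-3-conn.}(ii): since $G_0$ is a simple $2-$connected plane cubic graph, $Lf(G_0)$ is $3-$connected; and by Definition~\ref{DEF:Leapfrog}(ii) (or Theorem~\ref{TH:vertexenvelope}) $Lf(G_0)$ is a plane cubic graph in which the faces $\mathcal{Q}$ corresponding to the vertices of $G_0$ form a facial $2-$factor, while every remaining face boundary is a hexagon. Thus $Lf(G_0)$, with the facial $2-$factor $\mathcal{Q}$, is exactly a graph of the type covered by hypothesis $(\mathbf{H})$, and the faces not in $\mathcal{Q}$ are all hexagons — in particular each such face is ``either a quadrilateral or a hexagon'', so the face-size hypothesis of Proposition~\ref{PR:2} is satisfied trivially.

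Next I would identify the reduced graph $H=Lf(G_0)/\mathcal{Q}$. Contracting each $\mathcal{Q}$-face (i.e., each hexagon $C_6(v)$, $v\in V(G_0)$) to a single vertex, and recalling that $C_6(v)$ and $C_6(w)$ share an edge precisely when $vw\in E(G_0)$, one sees that $H$ is obtained from $G_0$ by replacing each original vertex by the contracted point and each original face of $G_0$ by a face of $H$ whose boundary now has length equal to that face's degree; equivalently $H=G_0^{*}$ is (a plane realization of) the dual of $G_0$, which — since $G_0$ is cubic — is a triangulation of the plane. This is the observation already recorded in the paragraph preceding the corollary: ``$H=G/\mathcal{Q}$ is a triangulation of the plane.'' In particular the outer face of $H$ is a triangle, as required by Proposition~\ref{PR:2}.

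It then remains to check the parity condition: Proposition~\ref{PR:2} needs $|V(H)|$ odd. Here $|V(H)|$ equals the number of $\mathcal{Q}$-faces of $Lf(G_0)$, which is $|V(G_0)|=n$. Wait — that would give $|V(H)|=n\equiv 2\pmod 4$, which is \emph{even}, so a naive count fails and this is the point requiring care. The resolution is that $|V(H)|$ is not $n$ but the number of \emph{faces} of $G_0$: after contracting all the $\mathcal{Q}$-faces, the vertices of $H$ are the $\mathcal{Q}^{c}$-faces — i.e., the faces of $G_0$. By Euler's formula, a plane cubic graph on $n$ vertices has $3n/2$ edges and hence $n/2+2$ faces; with $n\equiv 2\pmod 4$ we get $n/2$ odd, so $n/2+2$ is odd. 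Hence $|V(H)|=n/2+2$ is odd, as needed. I expect the main obstacle to be exactly this bookkeeping step — correctly pinning down which cells of $Lf(G_0)$ become the vertices of $H$ after the contraction (faces of $G_0$, not vertices), and then extracting the parity from Euler's formula; once that is settled, Proposition~\ref{PR:2} applies verbatim to give a spanning tree of triangular faces of $H$ and hence a hamiltonian cycle of $Lf(G_0)$, under the standing assumption that $Lf(G_0)/\mathcal{Q}$ satisfies the invariant property.
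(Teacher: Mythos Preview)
Your overall strategy is exactly the paper's: verify via Theorem~\ref{TH:Lf-3-conn.}(ii) and Theorem~\ref{TH:vertexenvelope} that $G=Lf(G_0)$ satisfies hypothesis $(\mathbf{H})$ with all $\mathcal{Q}^c$-faces hexagonal, observe that $H=G/\mathcal{Q}$ is a plane triangulation (so its outer face is a triangle), check that $|V(H)|$ is odd from $n\equiv 2\pmod 4$, and invoke Proposition~\ref{PR:2}. The parity computation via Euler's formula is precisely the remark the paper makes immediately after the corollary.

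There is, however, a persistent mislabeling you should clean up: you have $\mathcal{Q}$ and $\mathcal{Q}^c$ swapped throughout. By the statement of the corollary, $\mathcal{Q}$ is the set of faces of $Lf(G_0)$ corresponding to the \emph{faces} of $G_0$ (the truncation cycles $C_w$, $w\in V(G_0^{*})$), not to its vertices; the hexagons $C_6(v)$, $v\in V(G_0)$, are precisely the $\mathcal{Q}^c$-faces. This matters, because the hexagons $C_6(v)$ do \emph{not} form a $2$-factor --- adjacent ones share an edge, so their union is all of $Lf(G_0)$ --- whereas the truncation cycles are pairwise vertex-disjoint and cover $V(Lf(G_0))$, hence genuinely constitute the facial $2$-factor. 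Correspondingly, the vertices of $H=Lf(G_0)/\mathcal{Q}$ are the contracted $\mathcal{Q}$-faces, i.e.\ the faces of $G_0$, not the ``$\mathcal{Q}^{c}$-faces'' as you write in your ``resolution''. Once you relabel consistently, your count $|V(H)|=|\mathcal{F}(G_0)|=n/2+2$ is correct and odd from the outset, and there is no ``naive count failing'' to repair: the confusion was purely in which set you were calling $\mathcal{Q}$. With that fixed, the reduction to Proposition~\ref{PR:2} is immediate and matches the paper's argument verbatim.
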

Note that $G_0$ has an odd number of faces if $n \equiv 2\pmod{4}$ 
where $n$ is the order of $G_0$ and thus for $G=Lf(G_0)$ we have that $H=G/\mathcal{Q}$ is of odd order.
Satisfying the invariant property is an essential condition in Corollary~\ref{cor:2-conn.}. As shown in Figure~\ref{FIG:NoQuasiSpanningTree}, for a    simple $2-$connected  cubic planar
graph  $G$,  in $H=Lf(G)/\mathcal{Q}$ the triangle $v_1v_2v_3v_1$ has seven direct successors, but as we show below, $H$ has no spanning tree of faces nor a quasi spanning tree of faces.

\begin{lemma}\label{lem:degree 4 vertex}
Let $H$ be a plane graph with the outer face $T=v_1v_2v_3v_1$ 
being triangular and
such  
that every face of  $H$ is a digon or a triangle. Suppose that  $H$ 
is $4-$connected
and that there is a vertex $v_0$ of degree $4$  inside of $T$ which   belongs to $4$ triangles at most 
one of which shares an edge with  $T$.
 If  $v_0v_4v_5v_0$ and 
$v_0v_6v_7v_0$ share no edge with each other nor with $T$ where 
$O^{+}(v_0)=\langle v_4,v_5,v_6,v_7\rangle$, then the  graph $H^{'}$ 
resulting from removing $v_0$, identifying $v_4$ with $v_5$ and identifying $v_6$ with $v_7$, satisfies the invariant property.
\end{lemma}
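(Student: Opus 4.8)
The goal is to show that the local surgery at a degree-$4$ vertex $v_0$ — deleting $v_0$ and identifying the two ``non-adjacent'' pairs $v_4\equiv v_5$ and $v_6\equiv v_7$ around $v_0$ — does not destroy the invariant property; that is, after the surgery no triangle (nor the outer triangle $T$) acquires a separating digon inside it, and no triangle acquires three or more direct successors. My strategy is entirely local and case-based, exploiting $4$-connectivity to rule out small separating configurations in $H$ itself, and then tracking which new triangles/digons can possibly appear in $H'$.

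\textbf{Step 1: understand the new faces created by the surgery.} First I would describe $H'$ explicitly near the identified vertices. Write $v^{*}$ for the vertex obtained from $v_4\equiv v_5$ and $w^{*}$ for the vertex from $v_6\equiv v_7$. In $H$, the four triangles at $v_0$ are $v_0v_4v_5v_0$, $v_0v_5v_6v_0$, $v_0v_6v_7v_0$, $v_0v_7v_4v_0$. After deleting $v_0$ and performing the two identifications, the two triangles $v_0v_4v_5v_0$ and $v_0v_6v_7v_0$ simply collapse (their boundary becomes a single vertex or a loop, which we discard), while $v_0v_5v_6v_0$ and $v_0v_7v_4v_0$ both become an edge $v^{*}w^{*}$ — so $v^{*}w^{*}$ is (potentially) a pair of parallel edges, i.e.\ a digon $D=v^{*}w^{*}v^{*}$, which under our convention we treat as a single edge unless it is separating. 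The key point I must establish is that $D$ is \emph{not} separating: the only vertices ``between'' the two copies of $v^{*}w^{*}$ came from the two collapsed triangles $v_0v_4v_5v_0$ and $v_0v_6v_7v_0$, which by hypothesis are facial (share no edge with anything and, since every face of $H$ is a digon or triangle, these triangles are themselves faces), so there is no vertex trapped inside $D$. Hence $D$ is a genuine (non-separating) digon and disappears.

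\textbf{Step 2: classify the triangles and digons of $H'$ that are new.} A triangle or digon of $H'$ that was not already one in $H$ must use either the new edge $v^{*}w^{*}$ or arise because $v_4,v_5$ (resp.\ $v_6,v_7$) got identified. So I would enumerate: (a) a triangle $v^{*}w^{*}x v^{*}$ in $H'$ comes from a path $v_4\cdots x\cdots v_6$ (or similar) of length $2$ in $H$ through edges not at $v_0$, i.e.\ from a quadrilateral through $v_0$ in $H$, or from a triangle of $H$ not through $v_0$ that had one endpoint $v_4$ and the other $v_6$; (b) a digon on $v^{*}$ (resp.\ $w^{*}$) comes from a triangle $v_4 y v_5 v_4$ in $H$ not through $v_0$, which would be a separating triangle in $H$ (it encloses $v_0$ or a region), contradicting nothing a priori but ruled out by $4$-connectivity as in (c); (c) $4$-connectivity: in a $4$-connected plane graph there is no separating digon at all and no separating triangle, since a separating digon gives a $2$-cut and a separating triangle a $3$-cut of $V(H)$ into two nonempty parts. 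This is the main leverage and it immediately kills most would-be bad configurations — in particular it forbids any triangle $v_4 y v_5 v_4$ or $v_6 y v_7 v_6$ in $H$ with a vertex on each side, so the identifications cannot manufacture a separating digon, and it forbids any separating triangle that would, after identification, become a digon.

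\textbf{Step 3: the direct-successor count.} Here is where I expect the real work. For each triangle $T_1$ of $H'$ with $int(T_1)\ne\emptyset$ I must show it has at most two direct successors. If $T_1$ does not contain the surgery region, it and its successors are inherited from $H$ unchanged, and we are done since $H$ satisfies the invariant property (being $4$-connected it has \emph{no} separating triangles at all, so trivially $\le 2$ successors). The delicate case is when $T_1$ is one of the new triangles $v^{*}w^{*}xv^{*}$, or when $T_1$ contained $v_0$ and now contains $v^{*}$ and/or $w^{*}$: I would argue, exactly in the spirit of the quadrilateral analysis in the proof of Theorem~\ref{LE:1}, that the new separating triangles produced inside such a $T_1$ all derive from quadrilaterals of $H$ incident with the relevant pair ($v_4v_5$ or $v_6v_7$), that these quadrilaterals form at most two nested families (one ``outermost'' representative each, on the $v^{*}$ side and the $w^{*}$ side), and that nestedness of the families forces the corresponding triangles of $H'$ into a $\preccurlyeq$-chain except for the at-most-two maximal ones — so at most two direct successors. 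The repeated use of $4$-connectivity of $H$ (no separating triangle, no separating digon, hence every triangle and digon of $H$ other than a face is impossible) is what makes each subcase collapse quickly; the main obstacle is bookkeeping the at-most-two outermost quadrilaterals on each of the two identification sites and checking they cannot combine into a third independent successor, which is the direct analogue of the ``at most one of $Q_1'$ and $Q_2''$ exists'' step in Theorem~\ref{LE:1}. Assembling these observations shows $T$ and every triangle inside it still satisfies the invariant property in $H'$, which is the claim.
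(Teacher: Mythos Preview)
Your overall strategy matches the paper's: use $4$-connectivity of $H$ to rule out separating digons and triangles in $H$, then classify the new separating triangles of $H'$ as arising from quadrilaterals of $H$ and show these organise into $\preccurlyeq$-chains with at most two maximal elements. Steps~1 and~2 are essentially correct and coincide with the opening of the paper's proof.

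The gap is in Step~3. Your claim that the relevant quadrilaterals ``form at most two nested families (one `outermost' representative each, on the $v^{*}$ side and the $w^{*}$ side)'' is too coarse. The paper identifies \emph{six} quadrilateral types, and two of them --- $Q_3=v_0v_4v_{12}v_6v_0$ (with $v_7$ inside) and $Q_{3'}=v_0v_4v_{12'}v_6v_0$ (with $v_5$ inside) --- pass through $v_0$ and use one vertex from each of the pairs $\{v_4,v_5\}$ and $\{v_6,v_7\}$. After the surgery these yield triangles $v^{*}v_{12}w^{*}v^{*}$ containing the new edge $v^{*}w^{*}$, and they do not belong to either ``the $v^{*}$ side'' or ``the $w^{*}$ side'' in your dichotomy. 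You noticed such quadrilaterals in Step~2(a) but then dropped them in Step~3 when you wrote that the new separating triangles ``derive from quadrilaterals of $H$ incident with the relevant pair ($v_4v_5$ or $v_6v_7$)''. Without handling $Q_3,Q_{3'}$ you cannot conclude that $T$ has at most two direct successors in $H'$.

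What the paper actually does to close this: it first excludes the simultaneous existence of $Q=v_0v_4v_8^{*}v_6v_0$ and $Q'=v_0v_5v_9^{*}v_7v_0$ (each with a vertex inside), so w.l.o.g.\ only the $Q_3,Q_{3'}$ side needs to be tracked; it then proves the mutual exclusions ``$Q_{2'}$ and $Q_3$ cannot coexist'' and ``$Q_{1'}$ and $Q_{3'}$ cannot coexist'', together with the containment chains $Q_2'\subset Q_3'\subset Q_{1'}'$ and $Q_1'\subset Q_{3'}'\subset Q_{2'}'$ among the outermost representatives; and finally runs three cases on whether $Q_3'$, $Q_{3'}'$, both, or neither exist. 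These exclusions and chains are exactly what force the successor count down to two --- they are not a consequence of a simple ``one maximal per side'' argument, and the analogy with Theorem~\ref{LE:1} does not transfer automatically because here the surgery involves \emph{two} identifications interacting through the new edge $v^{*}w^{*}$. Your sketch needs to incorporate this six-type classification and the two containment chains to become a proof.
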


\begin{proof}
Since $H$ is $4-$connected and   $v_0$ is a vertex of degree $4$ inside of $T$, therefore $v_0$  belongs to $4$ triangles at most one of which shares an edge with $T$. Therefore  there exist triangles $v_0v_4v_5v_0$ and $v_0v_6v_7v_0$ which  share no edge with each other nor with $T$. Thus $H^{'}$ is well defined. By 4-connectivity of $H$, it
 has no separating digon nor a separating triangle.
 Note that
$H^{'}$ has no separating digon; otherwise, $H$ has a separating 
triangle, which is a contradiction. We show that every triangle in 
$H^{'}$ has at most two direct successors.

 We first observe that there cannot exist simultaneously two quadrilaterals 
$Q=v_0v_4v_8^{*}v_6v_0$ and $Q^{'}=v_0v_5v_9^{*}v_7v_0$ with $int(Q)\subset {{T}},\ int(Q^{'})\subset {{T}}$, each containing a vertex inside, 
$x$ and $x^{'}$, respectively, 
other than $v_4,v_5,v_6$, $v_7$, and   
$\{v_4,v_5,v_6,v_7\}\cap \{v_8^{*},v_9^{*}\}=\emptyset$.
 Otherwise, $v_8^{*}=v_9^{*}$, in which case 
there is a separating triangle containing $x$ or $x^{'}$ in  $H$, contradicting that $H$ is $4-$connected. 
Thus, without loss of generality, suppose  $Q^{'}$ does not exist. (Note that the quadrilateral $v_4v_iv_6v_{13}v_4$, $i\in \{5,7\},$  either is contained in  the quadrilateral $v_0v_4v_{13}v_6v_0$  or contains  the quadrilateral $v_0v_4v_{13}v_6v_0$,
and also  the quadrilateral $v_5v_jv_7v_{15}v_5$, $j\in \{4,6\}$, either is contained in    the quadrilateral $v_0v_5v_{15}v_7v_0$ or contains the quadrilateral $v_0v_5v_{15}v_7v_0$.)

There may, however, appear separating triangles inside of $T$ in $H^{'}$.
Such triangles derive from the following quadrilaterals. 
Let $T_{Q}\subset H^{'}$ be the triangle   deriving from the 
quadrilateral $Q\subset H$.
\begin{itemize}
\item $Q_1=v_4v_5v_8v_9v_4$ 
with $\{v_0,v_6,v_7\}\cap int(Q_1)=\emptyset$ and $\{v_0,v_6,v_7\}\cap \{v_8,v_9\}=\emptyset$ but $V(H)\cap int(Q_1)\ne\emptyset$.

\item $Q_{1^{'}}=v_4v_5v_{8^{'}}v_{9^{'}}v_4$ with $v_0 \in int(Q_{1^{'}})$  and  
$\{v_6,v_7\}\cap \{v_{8^{'}},v_{9^{'}}\}=\emptyset$.

\item $Q_2=v_6v_7v_{10}v_{11}v_6$ 
with $v_0 \notin int(Q_2)$ but $V(H)\cap int(Q_2)\ne \emptyset $ and
  such that\\
$\{v_0,v_4,v_5\}\cap \{v_{10},v_{11}\}=\emptyset$.

\item $Q_{2^{'}}=v_6v_7v_{{10}^{'}}v_{{11}^{'}}v_6$ with $v_0 \in int(Q_{2^{'}})$  and  
$\{v_4,v_5\}\cap \{v_{{10}^{'}},v_{{11}^{'}}\}=\emptyset$.

\item $Q_3=v_0v_4v_{12}v_{6}v_0$ containing  $v_7$ and at least another vertex inside.

\item $Q_{3^{'}}=v_0v_4v_{{12}^{'}}v_{6}v_0$ containing $v_5$ and at least another vertex inside. 
\end{itemize}

\noindent
Note that 

 \noindent
{\it  $H$ cannot contain 
  two quadrilaterals 
$Q_{2^{'}}$ and $Q_3$ simultaneously, and it also  cannot contain 
two quadrilaterals 
$Q_{1^{'}}$ and $Q_{3^{'}}$ simultaneously;\hfill $(*)$}\\

\noindent
otherwise, either $v_{12}=v_{{10}^{'}}$ and $H$ contains a separating 
triangle containing a vertex in $int(Q_3)$ other than $v_7$, or $v_{12}=v_{{11}^{'}}$ and $H$ contains a separating digon
$v_6v_{{11}^{'}}v_6$, which is a contradiction.

 No quadrilateral $Q\in \{Q_1,Q_{1^{'}},Q_2,Q_{2^{'}},Q_3,Q_{3^{'}}\}$
 contains a  chord inside; otherwise, there would be
a separating triangle inside of $T$, which is a contradiction.
 This implies that for all such quadrilaterals $Q^{*}$ and 
 $Q^{**}$ of the same type as $Q$, we have either $Q^{*}\subset Q^{**}$
or $Q^{**}\subset Q^{*}$. So let $Q^{'}$ be the quadrilateral of the same type as $Q$  containing all  quadrilaterals of its type, for 
each $Q\in \{Q_1,Q_{1^{'}},Q_2,Q_{2^{'}},Q_3,Q_{3^{'}}\}$.
\\

\noindent
{\it Note that $Q_2^{'}\subset Q_3^{'}\subset Q_{1^{'}}^{'}$, and
symmetrically,
$Q_1^{'}\subset Q_{3^{'}}^{'}\subset Q_{2^{'}}^{'}$.\hfill $(**)$}\\

Now we have to consider  the following cases.\\

\noindent
{\bf Case 1}
There exist the quadrilaterals $Q_3^{'}$ and $Q_{3^{'}}^{'}$ simultaneously.
\\

 In this case, by $(*)$, the graph $H$ has  no $Q_{1^{'}}^{'}$ nor 
 $Q_{2^{'}}^{'}$. Thus by $(**)$, $T$ has two direct successors 
 $T_{Q_3^{'}}$ and $T_{Q_{3^{'}}^{'}}$ in $H^{'}$.
Every triangle  of $H^{'}$ deriving from a quadrilateral of the same type 
as $Q_1$ (or symmetrically,  of the same type 
as $Q_2$)   has  at most one direct successor
deriving from  a quadrilateral of its type.
Every triangle deriving from a quadrilateral of the same type 
as $Q_3$ (or symmetrically, of the same type 
as  $Q_{3^{'}}$)   has at most one direct successor
deriving from either a quadrilateral of its type or a quadrilateral of the same  type as $Q_2$ (or of the same type 
as  $Q_1$). Thus in Case 1, $H^{'}$  satisfies the invariant property. \\

\noindent
{\bf Case 2}
There exists the quadrilateral $Q_3^{'}$ but no $Q_{3^{'}}^{'}$.
\\

In this case, by $(*)$, there is no  $Q_{2^{'}}^{'}$.
Thus by $(**)$, $T$ has at most two direct successors: they are either
 $T_{Q_1^{'}}$ and $T_{Q_{1^{'}}^{'}}$ or $T_{Q_1^{'}}$ and $T_{Q_3^{'}}$.
 Every triangle deriving from a quadrilateral of the same type 
as $Q_1$ (or symmetrically,  of the same type 
as $Q_2$)   has  at most one direct successor
deriving from  a quadrilateral of its type.
Every triangle deriving from a quadrilateral of the same type 
as  $Q_{1^{'}}$   has  at most one direct successor
deriving from  either a quadrilateral of its type or 
a quadrilateral of type $Q_3$. Every triangle deriving from a quadrilateral of the same type 
as $Q_3$  has at most one direct successor
deriving from either a quadrilateral of its type or a quadrilateral of the same  type as $Q_2$. Thus in this case, $H^{'}$  satisfies the invariant property. \\ 
If there is a quadrilateral $Q_{3^{'}}^{'}$ but no  $Q_3^{'}$, we argue analogously.\\
 
\noindent
{\bf Case 3}
$T$ contains neither $Q_3^{'}$  nor $Q_{3^{'}}^{'}$.
\\

In this case, by  $(**)$ we have $Q_1^{'}\subset Q_{2^{'}}^{'}$ and
$Q_2^{'}\subset Q_{1^{'}}^{'}$. So $T$ has at most two direct successors
in $H^{'}$.
 Every triangle deriving from a quadrilateral of the same type 
as $Q_1$ (or symmetrically,  of the same type 
as $Q_2$)   has  at most one direct successor
deriving from  a quadrilateral of its type.
Every triangle deriving from a quadrilateral of the same type 
as $Q_{1^{'}}$ (or of the same type 
as $Q_{2^{'}}$)   has at most one direct successor
deriving from either a quadrilateral of its type or a quadrilateral of the same  
type as $Q_2$ (or symmetrically,  of the same type 
as $Q_1$). Therefore also in Case $3$,  $H^{'}$  satisfies the invariant property. Lemma~\ref{lem:degree 4 vertex} now follows.
\end{proof}

In the case of $H$ having an even number of vertices,  we 
are now able to find a 
quasi spanning tree of faces in $H$ provided $H$ has a degree $4$ 
vertex.

\begin{proposition}\label{PR:even-order}
Consider $G$ and $\mathcal{Q}$ as in Proposition~$\ref{PR:2}$.
  Suppose that the reduced graph $H=G/\mathcal{Q}$ is $4-$connected
  and that  the outer face $T$ of $H$ is triangular.
 If $H$ has an even number
of vertices, and such that there is a vertex of degree $4$  in $int(T)$,
 then $H$ has a quasi spanning tree of faces which are triangles, and so $G$ is hamiltonian.
\end{proposition}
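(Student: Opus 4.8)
The plan is to mimic the strategy of Proposition~\ref{PR:2}, but to use the degree-$4$ vertex $v_0 \in int(T)$ to pass from the even-order graph $H$ to an odd-order graph to which the machinery already developed applies. Concretely, since $H$ is $4$-connected and $v_0$ has degree $4$ inside the triangular outer face $T$, the four faces at $v_0$ are triangles (no digons by $4$-connectivity), at most one of which shares an edge with $T$; writing $O^{+}(v_0)=\langle v_4,v_5,v_6,v_7\rangle$, at least one of the opposite pairs, say $\{v_0v_4v_5v_0, v_0v_6v_7v_0\}$, consists of two triangles sharing no edge with each other nor with $T$. First I would invoke Lemma~\ref{lem:degree 4 vertex}: the graph $H'$ obtained from $H$ by deleting $v_0$ and identifying $v_4$ with $v_5$ and $v_6$ with $v_7$ satisfies the invariant property. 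Moreover $H'$ has outer face still the triangle $T$, its faces are triangles and digons, and crucially $|V(H')| = |V(H)| - 3$, so $H'$ has an \emph{odd} number of vertices.

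Next I would apply Proposition~\ref{PR:2} to $H'$ to obtain a spanning tree of faces $H'_{\mathcal{T}'}$ consisting of triangular faces, with $T \notin \mathcal{T}'$ (the construction there contracts bounded triangular faces inside $T$). Then I would pull this back to $H$. Each face of $\mathcal{T}'$ is a triangle of $H'$; it either is a triangle of $H$ already, or it is a triangle of $H'$ deriving from a quadrilateral of $H$ through one of the two identifications $v_4\!\equiv\!v_5$ or $v_6\!\equiv\!v_7$ (these are exactly the quadrilaterals $Q_1, Q_{1'}, Q_2, Q_{2'}, Q_3, Q_{3'}$ enumerated in the proof of Lemma~\ref{lem:degree 4 vertex}, plus the symmetric ones). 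For the triangles of the first type I keep them; for a triangle of the second type, deriving from a quadrilateral $Q$ of $H$, I replace it by the face $Q$ of $H$. I would then adjoin to this collection the two triangles $v_0v_4v_5v_0$ and $v_0v_6v_7v_0$, call the resulting face-set $\mathcal{T}$, and claim $H_{\mathcal{T}}$ is a quasi spanning tree of faces of $H$: every vertex of $H\setminus\{v_0,v_4,v_5,v_6,v_7\}$ is covered because it was covered in $H'$ (identifications do not delete vertices other than $v_0$), $v_0,v_4,v_5,v_6,v_7$ are covered by the two new triangles, and $\mathcal{R}(U,\mathcal{T})$ is a tree: contracting each face of $\mathcal{T}$ back to the corresponding face of $H'$ turns it into the tree $\mathcal{R}(V(H'),\mathcal{T}')$, while the two triangles at $v_0$ are attached at $v_4$ (respectively $v_6$) like two pendant cycles, creating no cycle in the radial graph. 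The vertices $v_4, v_5, v_6, v_7$ become quasi vertices (the degree-parity condition $|\{F\in\mathcal T: x\in V(F)\}| = \tfrac12\deg_H(x)$ holds for them by inspection), while all other vertices retain the proper/quasi status they had in $H'$. Applying Proposition~\ref{PR:1} to $H_{\mathcal{T}}$ then gives a hamiltonian cycle of $G$.

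The main obstacle I expect is the bookkeeping in the pull-back: one must verify that when a face of $\mathcal{T}'$ comes from a quadrilateral $Q$ of $H$ containing vertices in its interior other than $v_0$, those interior vertices are themselves covered by \emph{other} faces of $\mathcal{T}$ and that replacing the triangle $T_Q$ by $Q$ does not introduce a cycle in the radial graph nor violate the half-degree condition at the two identified vertices. This is precisely why Lemma~\ref{lem:degree 4 vertex} was stated with the careful enumeration of the quadrilateral types $Q_1,\dots,Q_{3'}$: the containment relations $(*)$ and $(**)$ proved there force the outermost such quadrilaterals to nest in a chain, so the corresponding faces of $H$ are pairwise edge-disjoint and their radial-graph contributions form a path rather than a cycle. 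A secondary point to check is that the quadrilateral faces $Q$ we substitute are genuinely faces of $G/\mathcal{Q}=H$ (they are $\mathcal{Q}^c$-faces, hence quadrilaterals by hypothesis) and that no two $\mathcal{Q}^c$-faces sharing an edge both end up inside the hamiltonian cycle — this follows because $H_{\mathcal{T}}$ being a quasi spanning tree of faces means the faces in $\mathcal{T}$ are pairwise edge-disjoint, which is exactly the condition translating (via Proposition~\ref{PR:1}) into the required property of $C$. Once the radial graph is confirmed to be a tree, the rest is the now-standard $A$-trail to hamiltonian cycle transfer.
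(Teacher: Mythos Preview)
Your overall plan is exactly that of the paper: form $H'$ from $H$ via Lemma~\ref{lem:degree 4 vertex}, apply Proposition~\ref{PR:2} to the odd-order $H'$, then lift the resulting spanning tree of faces back to $H$ by adjoining the two triangles $v_0v_4v_5v_0$ and $v_0v_6v_7v_0$. However, you have over-complicated the pull-back step because of a confusion between two different kinds of objects. The quadrilaterals $Q_1, Q_{1'}, Q_2, Q_{2'}, Q_3, Q_{3'}$ in the proof of Lemma~\ref{lem:degree 4 vertex} are \emph{separating $4$-cycles} of $H$, not faces. By hypothesis every $\mathcal{Q}^c$-face of $G$ is a quadrilateral or a hexagon, so every face of $H=G/\mathcal{Q}$ is a digon or a triangle; $H$ has no quadrilateral faces at all. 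Furthermore, every \emph{triangular face} of $H'$ already corresponds to a triangular face of $H$: the passage $H\to H'$ destroys only the four faces around $v_0$, turns the two faces on the edges $v_4v_5$ and $v_6v_7$ into digons, and leaves every other face of $H$ intact (with relabelled vertices); the only new faces created in $H'$ are digons, and these cannot lie in $\mathcal{T}'$ since Proposition~\ref{PR:2} produces a spanning tree of \emph{triangular} faces. Thus your ``second type'' is empty, no quadrilateral faces need to be substituted, and the pull-back is simply ``take the same triangles, read in $H$''.

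A second slip: the sole quasi vertex in the resulting $H_\mathcal{T}$ is $v_0$ (it lies in exactly $2=\tfrac12\deg_H(v_0)$ faces of $\mathcal{T}$), not $v_4,v_5,v_6,v_7$. These four are \emph{proper} vertices: with $U=V(H)\setminus\{v_0\}$, the restricted radial graph $\mathcal{R}(U,\mathcal{T})$ is obtained from the tree $\mathcal{R}(V(H'),\mathcal{T}')$ by replacing the vertex $w_1=v_4\equiv v_5$ by the path $v_4-[v_0v_4v_5v_0]-v_5$ and redistributing the former neighbours of $w_1$ between $v_4$ and $v_5$ (and likewise for $w_2$). Vertex-splitting preserves tree-ness, so no half-degree condition needs checking at $v_4,\dots,v_7$. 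With these two corrections your argument coincides with the paper's.
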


\begin{proof}
Note that the graphs $H$ under consideration satisfy the invariant property in a more restricted way (since by $\kappa(H)=4$, 
the graph $H$ has no separating digon nor a separating triangle).
Let $T=v_1v_2v_3v_1$ be the outer face of $H$ and let $v_0$ be a vertex 
of degree $4$ in $int(T)$.  By the hypothesis 
(no separating digon nor a separating triangle and $|V(H)|$ is even), $v_0$ cannot be incident to multiple edges
unless $K_4$ spans $H$ and without loss of generality, $v_0v_3$ 
is a multiple edge
in which case $H$ is not $4-$connected. Nonetheless in
 this exceptional case the two triangular faces 
$v_0v_1v_3v_0$ and $v_0v_2v_3v_0$ define a  quasi spanning tree of faces of $H$ with the 
quasi vertex $v_0$. Therefore, in what follows we may assume that
the vertex $v_0$ belongs to $4$ triangles at most 
one of which shares an edge with $T$
and proceed as in the proof of Lemma~\ref{lem:degree 4 vertex}. Set $N(v_0)=\{v_4,v_5,v_6,v_7\}$ 
and $O^{+}(v_0)=\langle v_4,v_5,v_6,v_7\rangle$.

 Select two triangles involving $v_0$
that do not share an edge with each other nor with $T$, say $v_0v_4v_5v_0$ and 
$v_0v_6v_7v_0$.

Let $H^{'}$ be the graph obtained from $H$ by removing $v_0$, 
identifying $v_4$ with $v_5$, and identifying $v_6$ with $v_7$.

Clearly, $H^{'}$ has an odd number of vertices and by Lemma~\ref{lem:degree 4 vertex}, $H^{'}$ has the invariant property. Thus by
 Proposition~\ref{PR:2}, $H^{'}$ has a spanning tree of faces
 $H_{\mathcal{T}^{'}}^{'}$  where all elements of $\mathcal{T}^{'}$   are triangles. It is easy to see that 
 the union of $v_0v_4v_5v_0$ and 
$v_0v_6v_7v_0$ with the corresponding faces of $\mathcal{T}^{'}$ in $H$ form 
a set of faces $\mathcal{T}$ and 
a quasi spanning tree of faces $H_{\mathcal{T}}$ in $H$ and $v_0$ is a quasi 
vertex  of $H_{\mathcal{T}}$.
\end{proof}

\medskip \noindent
Since  every simple $4-$connected eulerian triangulation of the plane has  
at least six vertices of degree $4$,
 the following is an immediate corollary of Propositions~\ref{PR:2} and~\ref{PR:even-order}.

\begin{corollary}\label{cor:4con-eulerian-triangulation}
Every simple $4-$connected eulerian triangulation of the plane has a 
quasi  spanning tree of faces.
\end{corollary}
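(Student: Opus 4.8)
The statement to be proved is Corollary~\ref{cor:4con-eulerian-triangulation}: every simple $4$-connected eulerian triangulation of the plane has a quasi spanning tree of faces. The plan is to exhibit such a triangulation as the reduced graph $H = G/\mathcal{Q}$ for a suitable $3$-connected cubic planar $G$ with facial $2$-factor $\mathcal{Q}$, and then invoke either Proposition~\ref{PR:2} or Proposition~\ref{PR:even-order}, depending on the parity of $|V(H)|$.

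First I would record the standard correspondence: given a plane triangulation $H$, form its dual $H^{*}$; since $H$ is a triangulation, $H^{*}$ is cubic, and since $H$ is $4$-connected and simple, $H^{*}$ is $3$-connected (and planar). Because $H$ is \emph{eulerian}, every vertex of $H$ has even degree, which means $H^{*}$ is face-$2$-colorable; the color classes of the bounded-plus-outer faces of $H^{*}$ give a partition of the faces of $H^{*}$, and one of these classes can be taken to be a facial $2$-factor $\mathcal{Q}$ of $H^{*}$ (each face in the class is a cycle, and together they cover every vertex of $H^{*}$ exactly once, since $H^{*}$ is cubic and properly $2$-face-colored). Contracting these $\mathcal{Q}$-faces recovers $H$: that is, $H = H^{*}/\mathcal{Q}$ with $G := H^{*}$ satisfying hypothesis $(\bf H)$. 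Moreover all the faces of $H^{*}$ \emph{not} in $\mathcal{Q}$ are the other color class, and contracting $\mathcal{Q}$ turns each of them into a vertex of $H$; dually, the faces of $H$ are triangles, so the $\mathcal{Q}^{c}$-faces of $G$ correspond to triangles of $H$ — in particular the hypothesis of Proposition~\ref{PR:2} that the $\mathcal{Q}^{c}$-faces are quadrilaterals or hexagons is vacuously not what we need; instead we need that $H$ itself is a triangulation, which it is by assumption, and that $H$ satisfies the invariant property.

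Next I would dispense with the invariant property: since $H$ is $4$-connected and simple, $H$ has no separating digon and no separating triangle at all, so in fact $\mathcal{A} = \emptyset$ and the invariant property holds trivially (vacuously) for $H$ — exactly as remarked in the proof of Proposition~\ref{PR:even-order}. One also needs the outer face of $H$ to be a triangle; since $H$ is a triangulation every face, including the outer one, is a triangle, so this is automatic (pick any facial triangle as the outer face of the embedding). Now split on parity. If $|V(H)|$ is odd, Proposition~\ref{PR:2} applies directly and yields a spanning tree of faces of $H$ (which is in particular a quasi spanning tree of faces). If $|V(H)|$ is even, I invoke the cited fact that every simple $4$-connected eulerian triangulation of the plane has at least six vertices of degree $4$; pick any such vertex, note it lies in $int(T)$ for the chosen outer triangle $T$ (or, in the degenerate $K_4$-spanning case, handle it by hand as Proposition~\ref{PR:even-order} does), and apply Proposition~\ref{PR:even-order} to get a quasi spanning tree of faces of $H$.

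The main obstacle, and the step deserving the most care, is the first one: verifying cleanly that $H$ genuinely arises as $H^{*}/\mathcal{Q}$ with a \emph{facial} $2$-factor $\mathcal{Q}$, and in particular that the faces of $H$ we need are exactly triangles (so that Proposition~\ref{PR:2}'s and Proposition~\ref{PR:even-order}'s structural hypotheses — ``$H$ a triangulation with triangular outer face satisfying the invariant property'' — are met), rather than getting tangled in the quadrilateral/hexagon hypothesis that those propositions were originally phrased with. In fact the cleanest route is simply to observe that the statement is explicitly flagged in the text as ``an immediate corollary of Propositions~\ref{PR:2} and~\ref{PR:even-order},'' together with the degree-$4$-vertex count; so the proof is essentially: take $T$ to be the outer face (a triangle, since $H$ is a triangulation), note $H$ has no separating digon or triangle by $4$-connectivity hence satisfies the invariant property vacuously, and then apply Proposition~\ref{PR:2} if $|V(H)|$ is odd and Proposition~\ref{PR:even-order} (using a degree-$4$ vertex in $int(T)$) if $|V(H)|$ is even. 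The only residual check is that ``$G$ is cubic with facial $2$-factor and $H = G/\mathcal{Q}$'' can be arranged, i.e.\ that a plane triangulation is always a contraction of faces of some cubic $G$ — which is exactly the eulerian condition guaranteeing the required $2$-face-coloring.
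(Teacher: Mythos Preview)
Your final summary is correct and matches the paper's one-line argument: split on the parity of $|V(H)|$, note that $4$-connectivity makes the invariant property vacuous, and apply Proposition~\ref{PR:2} in the odd case and Proposition~\ref{PR:even-order} in the even case (the at-least-six degree-$4$ vertices guarantee one in $int(T)$, since $|V(T)|=3$). That is all the paper does.

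However, the bulk of your proposal --- the construction of $G$ as the dual $H^{*}$ --- is genuinely wrong, and your ``residual check'' at the end repeats the error. First, $H$ eulerian does \emph{not} make $H^{*}$ face-$2$-colorable: a plane graph is face-$2$-colorable iff it is itself eulerian, and $H^{*}$ is cubic. What $H$ eulerian gives you is that every face of $H^{*}$ has even length, i.e.\ $H^{*}$ is bipartite; that is a different statement. Second, and more seriously, even once you equip $G=H^{*}$ with a facial $2$-factor $\mathcal{Q}$ (which you can, via the $3$-face-coloring of a bipartite cubic planar graph), you do \emph{not} get $H^{*}/\mathcal{Q}=H$: the vertices of $H^{*}/\mathcal{Q}$ are the $\mathcal{Q}$-faces of $H^{*}$, hence a proper subset of $V(H)$, and the $\mathcal{Q}^{c}$-faces have lengths equal to vertex degrees of $H$, which need not be $4$ or $6$. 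If you really want to exhibit $H$ as $G/\mathcal{Q}$ with the hypotheses of Proposition~\ref{PR:2}, take $G=Tr(H)$ (equivalently $Lf(H^{*})$) with $\mathcal{Q}=\{C_v:v\in V(H)\}$; then $G$ is cubic and $3$-connected (Theorem~\ref{TH:Lf-3-conn.}(ii), since $H^{*}$ is simple $2$-connected), every $\mathcal{Q}^{c}$-face is a hexagon because $H$ is a triangulation, and $G/\mathcal{Q}=H$ on the nose. Alternatively, simply observe --- as the paper implicitly does --- that the proofs of Theorem~\ref{LE:1}, Proposition~\ref{PR:2}, and Proposition~\ref{PR:even-order} use only the plane graph $H$ (with faces that are digons or triangles, here all triangles), so the ``$H$ has a (quasi) spanning tree of faces'' conclusions stand without ever producing $G$.
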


\begin{example}
We claim that the  $3-$connected triangulation of the plane of Figure~$\ref{FIG:NoQuasiSpanningTree}$ below has no quasi spanning tree of 
faces.
\begin{figure}[ht]

\setlength{\unitlength}{0.15cm}
\vspace{1cm}
\begin{center}

\begin{picture}(40,30)
\put(2,25){$H$}
\put(0,0){\circle*{1.2}}
\put(-3.5,0){\footnotesize$v_1$}
\put(40,0){\circle*{1.2}}
\put(41,0){\footnotesize$v_3$}
\put(20,34.6){\circle*{1.2}}
\put(19,36){\footnotesize$v_2$}

\put(20,7.05){\circle*{1.2}}
\put(20.5,5.2){\footnotesize$v_{4}$}
\put(24,13.6){\circle*{1.2}}
\put(21,14.1){\footnotesize$v_6$}
\put(16,13.6){\circle*{1.2}}
\put(17,14.1){\footnotesize$v_5$}

\put(20,3.52){\circle*{1.2}}
\put(19,1.5){\footnotesize$v_{12}$}

\put(27,15.5){\circle*{1.2}}
\put(27.5,15.4){\footnotesize$v_{10}$}

\put(13,15.5){\circle*{1.2}}
\put(10,15.4){\footnotesize$v_8$}

\put(20,11.4){\circle*{1.2}}
\put(17.65,10.2){\footnotesize$v_{0}$}

\put(12,6.8){\circle*{1.2}}
\put(13,7.3){\footnotesize$v_7$}

\put(28,6.8){\circle*{1.2}}
\put(24,7.3){\footnotesize$v_{11}$}

\put(20,21){\circle*{1.2}}
\put(19,18){\footnotesize$v_9$}

\qbezier(0,0)(0,0)(40,0)
\qbezier(0,0)(0,0)(20,34.6)
\qbezier(0,0)(0,0)(20,7.05)
\qbezier(0,0)(0,0)(16,13.6)
\qbezier(0,0)(0,0)(20,3.52)
    \qbezier(20,3.52)(20,3.52)(40,0)
    \qbezier(20,3.52)(20,3.52)(20,7.05)
\qbezier(0,0)(0,0)(12,6.8)
    \qbezier(12,6.8)(12,6.8)(20,7.05)
    \qbezier(12,6.8)(12,6.8)(16,13.6)
\qbezier(0,0)(0,0)(13,15.5)
    \qbezier(13,15.5)(13,15.5)(16,13.6)
    \qbezier(13,15.5)(13,15.5)(20,34.6)

\qbezier(40,0)(40,0)(20,34.6)
\qbezier(40,0)(40,0)(20,7.05)
\qbezier(40,0)(40,0)(24,13.6)
\qbezier(40,0)(40,0)(28,6.8)
    \qbezier(28,6.8)(28,6.8)(20,7.05)
    \qbezier(28,6.8)(28,6.8)(24,13.6)
\qbezier(40,0)(40,0)(27,15.5)
    \qbezier(27,15.5)(27,15.5)(24,13.6)
    \qbezier(27,15.5)(27,15.5)(20,34.6)

\qbezier(20,34.6)(20,34.6)(16,13.6)
\qbezier(20,34.6)(20,34.6)(24,13.6)
\qbezier(20,34.6)(20,34.6)(20,21)
    \qbezier(20,21)(20,21)(24,13.6)
    \qbezier(20,21)(20,21)(16,13.6)

\qbezier(20,7.05)(20,7.05)(16,13.6)
\qbezier(20,7.05)(20,7.05)(24,13.6)
\qbezier(16,13.6)(16,13.6)(24,13.6)

\qbezier(20,11.4)(20,11.4)(20,7.05)
\qbezier(20,11.4)(20,11.4)(24,13.6)
\qbezier(20,11.4)(20,11.4)(16,13.6)

\end{picture}
\end{center}
\caption{\small\it 
 A $3-$connected triangulation $H$ of the plane without quasi spanning tree
 of faces.}
\label{FIG:NoQuasiSpanningTree}
\end{figure}
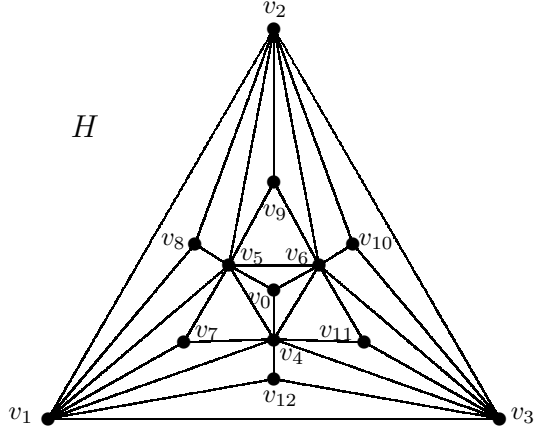

Proceeding by contradiction, we first assume that there is a set $\mathcal{T}$  of faces and $H_{\mathcal{T}}$ is a spanning tree of faces in 
$H$. For every degree three vertex $v_0$ and $v_i$, $7\le i\le 12$, there exists 
precisely one triangle in $\mathcal{T}$ containing $v_0$ or $v_i$. Without loss of 
generality and because of symmetry, $\{v_{0}v_4v_5v_0,v_1v_7v_5v_1\}\subset \mathcal{T}$. Then 
$v_1v_5v_8v_1\notin \mathcal{T}$, so $v_1v_2v_8v_1$ or 
$v_2v_5v_8v_2\in \mathcal{T}$. Since $H_{\mathcal{T}}$ has no quasi vertex, 
and because no two faces in $\mathcal{T}$ share  an edge, therefore
$v_2v_5v_9v_2\notin \mathcal{T}$ and  so $v_2v_6v_9v_2$ or $v_5v_6v_9v_5$ belongs to $\mathcal{T}$, thus as a consequence  $v_4v_6v_{11}v_4\notin \mathcal{T}$.
Again since $H_{\mathcal{T}}$ has no quasi vertex, 
$v_1v_4v_{12}v_1\notin \mathcal{T}$ and so $v_1v_3v_{12}v_1$ or 
$v_3v_4v_{12}v_3\in \mathcal{T}$. 
Therefore, $\{v_3v_4v_{11}v_3,v_3v_6v_{11}v_3\}\cap \mathcal{T}=\emptyset\ ($otherwise, there is a cycle of faces in $H_{\mathcal{T}})$.
Thus, there is no face  in $\mathcal{T}$ containing $v_{11}$, which is 
a contradiction. By a similar argument one can show that $H$ has no 
quasi spanning tree of faces,
observing that quasi vertices must have even degree and thus without loss of generality, $v_5$ would be a quasi vertex and
 $\{v_0v_4v_5v_0,v_1v_5v_7v_1,v_2v_5v_8v_2,v_5v_6v_9v_5\}\subset \mathcal{T}$;
 and as a consequence, $v_4$ and $v_6$ must be proper vertices; otherwise, $v_1v_4v_7v_1 \in \mathcal{T}$ or $v_0v_4v_6v_0 \in \mathcal{T}$, respectively, which is a contradiction. 
\end{example}

Corollary~\ref{cor:4con-eulerian-triangulation} implies a result on hamiltonicity in 
planar cubic bipartite graphs.

\begin{theorem}\label{TH:4conn}
Let $G$ be a bipartite cubic  planar graph with the following properties.

\item[$(i)$]
In the natural $3-$face coloring  of $G$ with colors $1,2,3$, two of the 
color classes $($without loss of generality, color classes $\mathcal{C}_1$ 
and $\mathcal{C}_2)$
contain hexagons only.

\item[$(ii)$] The contraction  of the faces in color class $\mathcal{C}_3$ is $4-$connected.

Then $G$ is hamiltonian. 
\end{theorem}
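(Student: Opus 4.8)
The plan is to take $\mathcal{Q}=\mathcal{C}_3$ as the facial $2$-factor and run the machinery of $(\bf H)$, Proposition~\ref{PR:1} and Corollary~\ref{cor:4con-eulerian-triangulation}. First I would record that in the natural $3$-face colouring each colour class is a facial $2$-factor of $G$: at every vertex the three incident faces carry the three distinct colours, so each $\mathcal{C}_i$ meets every vertex exactly once, and its cycles are face boundaries. Put $\mathcal{Q}=\mathcal{C}_3$. By hypothesis $(i)$ the complementary faces $\mathcal{Q}^{c}=\mathcal{C}_1\cup\mathcal{C}_2$ are hexagons only (so the ``quadrilaterals or hexagons'' assumption of Proposition~\ref{PR:2}/Proposition~\ref{PR:1} holds vacuously as hexagons), and since $G$ is bipartite every $\mathcal{C}_3$-face has even size $\ge 4$.

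Next I would analyse $H=G/\mathcal{Q}$. Going around a hexagonal face $F\in\mathcal{C}_1\cup\mathcal{C}_2$, the $1$-$2$-$3$ colouring forces the six bounding edges of $F$ to alternate between ``borders a $\mathcal{C}_3$-face'' and ``borders a face of the other of $\mathcal{C}_1,\mathcal{C}_2$'' (at each vertex of $F$ the two faces $\neq F$ have the other two colours); hence exactly three edges of $F$ are absorbed when the $\mathcal{C}_3$-faces are contracted, and $F$ becomes a triangle of $H$. Thus $H$ is a plane triangulation, and since $\deg_H(x)$ equals the size of the $\mathcal{C}_3$-face corresponding to $x$, every degree of $H$ is even, i.e. $H$ is an Eulerian triangulation. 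By hypothesis $(ii)$, $H$ is $4$-connected, which (no separating digon, no loop) also makes $H$ simple. Finally I would check that $G$ is $3$-connected, as required by $(\bf H)$: by Theorem~\ref{TH:vertexenvelope} applied with $\mathcal{Q}=\mathcal{C}_3$ we have $G=Lf(G_0)$ for a plane cubic $G_0$, and contracting the $\mathcal{Q}$-faces recovers $H=G_0^{*}$; since $H$ is $4$-connected it is $3$-connected, hence $G_0=H^{*}$ is $3$-connected by Whitney duality, hence $G=Lf(G_0)$ is $3$-connected by Theorem~\ref{TH:Lf-3-conn.}$(ii)$. So $G,\mathcal{Q},H$ legitimately satisfy the standing hypotheses $(\bf H)$.

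The rest is pure citation. Being a simple $4$-connected Eulerian triangulation of the plane, $H$ has a quasi spanning tree of faces $H_{\mathcal{T}}$ by Corollary~\ref{cor:4con-eulerian-triangulation}; and since that corollary is obtained from Propositions~\ref{PR:2} and~\ref{PR:even-order}, whose trees consist entirely of \emph{bounded} triangular faces, the outer face of $H$ is not in $\mathcal{T}$ (re-embedding $H$, and with it $G$, so that the outer face is the one used there if necessary). Proposition~\ref{PR:1} then converts $H_{\mathcal{T}}$ into a hamiltonian cycle of $G$, which is exactly the conclusion sought.

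I expect the only genuine work to be the middle paragraph: verifying cleanly that $H=G/\mathcal{C}_3$ is a \emph{simple} Eulerian \emph{triangulation} and that $G$ inherits $3$-connectivity, so that $(\bf H)$ and Proposition~\ref{PR:1} really apply. All the hard combinatorics — the triangle-contraction step, the invariant property, the existence of (quasi) spanning trees of faces — has already been packed into Lemma~\ref{LE:1}, Proposition~\ref{PR:2}, Lemma~\ref{lem:degree 4 vertex}, Proposition~\ref{PR:even-order} and Corollary~\ref{cor:4con-eulerian-triangulation}. If one preferred to avoid the leapfrog/Whitney route for $3$-connectivity, an alternative would be to argue directly that a $1$- or $2$-separation of $G$ would induce a corresponding small separation of $H$, contradicting $\kappa(H)=4$; but the duality argument seems the cleanest.
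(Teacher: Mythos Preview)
Your proposal is correct and follows exactly the route the paper intends: the paper does not give a standalone proof of Theorem~\ref{TH:4conn} but simply states that Corollary~\ref{cor:4con-eulerian-triangulation} implies it, and you have supplied precisely the missing verification that $H=G/\mathcal{C}_3$ is a simple $4$-connected eulerian triangulation (so Corollary~\ref{cor:4con-eulerian-triangulation} applies) and that $G$ satisfies $(\bf H)$ (so Proposition~\ref{PR:1} converts the quasi spanning tree of faces into a hamiltonian cycle). Your check of $3$-connectivity of $G$ via Theorems~\ref{TH:Lf-3-conn.} and~\ref{TH:vertexenvelope} is a clean way to close the only point the paper leaves entirely implicit.
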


\begin{lemma}\label{LEM:cyclic}
Let $G$ be a simple  cubic  planar graph and let $\mathcal{Q}$ be 
the set of faces in $Lf(G)$ corresponding to the faces of $G$. Then,
$$\kappa^{'}_c(G)=\kappa(Lf(G)/\mathcal{Q}).$$
\end{lemma}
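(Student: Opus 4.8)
The plan is to establish the equality $\kappa^{'}_c(G)=\kappa(Lf(G)/\mathcal{Q})$ by exploiting the explicit structure of $H := Lf(G)/\mathcal{Q}$. First I would identify $H$ concretely: by Definition~\ref{DEF:Leapfrog}(ii), $Lf(G)$ is obtained from $G$ by replacing each $v\in V(G)$ with a hexagon $C_6(v)$, with $C_6(v)$ and $C_6(w)$ sharing an edge exactly when $vw\in E(G)$; the faces of $\mathcal{Q}$ are precisely the faces of $Lf(G)$ corresponding to the faces of $G$ (one $\mathcal{Q}$-face per face of $G$, and these are the $|V(G)|$-many hexagons $C_6(v)$ only in the trivial reading — I must be careful here), so contracting each $\mathcal{Q}$-face to a point yields a graph whose vertices correspond to faces of $G$ and whose edges correspond to edges of $G$ (two face-vertices being joined once for each edge of $G$ on which the two faces abut). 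In other words $H = Lf(G)/\mathcal{Q}$ is naturally isomorphic to the planar dual $G^{*}$ of $G$, possibly with the proviso about parallel edges being treated as single edges. So the lemma reduces to the classical planar-duality fact that $\kappa^{'}_c(G) = \kappa(G^{*})$: the cyclic edge-connectivity of a cubic planar graph equals the vertex-connectivity of its dual.

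Next I would prove that reduction directly. Let $k=\kappa^{'}_c(G)$. A cyclic edge cut of $G$ of size $k$ is a minimal edge set $F\subseteq E(G)$ whose removal splits $G$ into two parts each containing a cycle; in the plane, the edges of $F$ correspond under duality to a set of $k$ edges of $G^{*}$, and because $G$ is cubic (so $G^{*}$ is a triangulation, or at least a simple planar graph once parallel edges are suppressed) the dual of a cyclic edge cut is a cycle in $G^{*}$ — a closed curve in the plane meeting $G$ exactly in $F$ separates the two sides, and its trace in $G^{*}$ is a cycle through $k$ faces. The condition that each side of $F$ contains a cycle of $G$ translates (again via planarity and cubicity) into the condition that this dual cycle has at least one vertex of $G^{*}$ strictly inside and at least one strictly outside, i.e.\ it is a \emph{separating} cycle of $G^{*}$. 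Conversely, a separating cycle of $G^{*}$ of length $\ell$ dualizes to a cyclic edge cut of $G$ of size $\ell$. Hence the minimum size of a cyclic edge cut of $G$ equals the minimum length of a separating cycle of $G^{*}$, which, since $G^{*}$ is a plane triangulation, equals $\kappa(G^{*})$ (in a plane triangulation the vertex-connectivity is the length of a shortest separating cycle — equivalently a shortest non-facial cycle). Combining, $\kappa^{'}_c(G)=\kappa(G^{*})=\kappa(H)$.

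The step I expect to be the main obstacle is the careful bookkeeping around parallel edges and degenerate cases: the paper's convention that parallel edges are treated as a single edge means $G^{*}$ may formally be a multigraph while $H$ is viewed as a simple graph, so I must check that suppressing digons does not change the relevant minima, and I should separately handle small cases (e.g.\ $G$ with no two disjoint cycles, $G=K_4$, $G$ a theta-graph or prism) where the definition of $\kappa^{'}_c$ falls back on ``two non-trivial components'' and where $\kappa(H)$ might be limited by $|V(H)|-1$. In those cases I would verify the equality by hand using the concrete correspondence above, noting that the ``non-trivial components'' clause of Definition~1 corresponds exactly on the dual side to separating off at least one vertex, so the translation still goes through. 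Once these boundary situations are dispatched, the duality argument in the previous paragraph gives the general equality, and Lemma~\ref{LEM:cyclic} follows.
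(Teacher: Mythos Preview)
Your proposal is correct and follows essentially the same route as the paper. The paper's proof establishes the same vertex--face and edge--edge correspondence (without explicitly naming $H$ as $G^{*}$), then argues that a minimum vertex cut $X$ in the triangulation $H$ induces a separating cycle whose dual edges form a cyclic edge cut of $G$, giving $\kappa(H)\ge\kappa'_c(G)$, and invokes the analogous converse for the other inequality; your extra care about parallel edges and degenerate small cases goes slightly beyond what the paper writes out, but the core duality argument is identical.
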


\begin{proof}
Let $H=Lf(G)/\mathcal{Q}$.
Note that by Definition~\ref{DEF:Leapfrog}~(ii), the reduced graph $H$ is
a triangulation of the plane and every edge of $H$ corresponds to 
a unique edge of $G$, and vice versa; and every vertex of $H$ corresponds 
to  a unique face of $G$, and vice versa.
Note that $G$ and $H$ can be drawn in the plane in such a way that 
$f\in V(H)$ lies in $int(F)$ where $f$ corresponds to the face $F\in \mathcal{F}(G)$,
and such that $ff^{'}\in E(H)$ crosses the corresponding edge 
$e\in E(bd(F))\cap E(bd(F^{'}))\subset E(G)$ precisely once.

Suppose that $X\subset V(H)$, $|X|=k$, is a minimum vertex cut in $H$. Since $H$ 
is a 
triangulation of the plane, the induced subgraph 
$\langle X\rangle_H$ is a  cycle
 $C=f_1f_2\ldots f_{k}f_1$ 
such that $int(C)\cap V(H)\neq \emptyset \neq ext(C)\cap V(H)$.
Denote some vertices $f_{k+1}\in int(C)\cap V(H)$ and 
$f_{k+2}\in ext(C)\cap V(H)$.

Denote by $v_iv_j\in E(G)$  the edge corresponding to the  edge
$f_if_j\in E(C)$, $1\le i,j\le k$.
Then, $Y=\{v_iv_{i+1}\ |\ 1\le i\le k-1\}\cup \{v_1v_{k}\}$ 
separates in $G$ the face boundaries whose corresponding vertices 
in $V(H)$ lie in $int(C)$ from the face boundaries  whose 
corresponding vertices in $V(H)$ lie in $ext(C)$.
 Thus, $Y$ is a 
cyclic edge cut of $G$ and therefore, $\kappa(H) \ge \kappa^{'}_c(G)$.
By an analogous argument we obtain  $\kappa(H) \le \kappa^{'}_c(G)$;
hence, $\kappa(H)= \kappa^{'}_c(G)$.
\end{proof}

In the graph $G$ as stated in Lemma~\ref{LEM:cyclic},  color  the faces 
in $\mathcal{Q}$ with  color $3$. Then by Theorem~\ref{TH:4conn} and Lemma~\ref{LEM:cyclic}, we obtain the following corollary.

\begin{corollary}\label{cor:cycl.4-edge-conn}
Let $G_0$ be a  cyclically $4-$edge-connected bipartite cubic planar
graph. Then the 
leapfrog extension of $G_0$ is hamiltonian.
\end{corollary}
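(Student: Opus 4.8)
The plan is to realize $G:=Lf(G_0)$ as a graph satisfying the hypotheses of Theorem~\ref{TH:4conn}, using Lemma~\ref{LEM:cyclic} to convert the cyclic $4$-edge-connectivity of $G_0$ into $4$-connectivity of the contraction that Theorem~\ref{TH:4conn} asks about. First I would dispose of non-simple $G_0$: the only cubic graph on at most two vertices is the theta graph $\Theta$, whose leapfrog extension is a hexagon and hence hamiltonian, while a cyclically $4$-edge-connected cubic graph on more vertices has no pair of parallel edges, since the two edges leaving a digon would otherwise form a cyclic $2$-edge-cut (one side carrying the digon, the other side a cycle because the graph is large enough). So I may assume $G_0$ is simple; since $\kappa'_c(G_0)\ge 4$ it is in particular $3$-connected, whence by Theorem~\ref{TH:Lf-3-conn.}(ii) $G=Lf(G_0)$ is a $3$-connected cubic plane graph.

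Next I would pin down the faces of $G$. By Definition~\ref{DEF:Leapfrog}(ii) (equivalently Theorem~\ref{TH:vertexenvelope}), $G$ carries a facial $2$-factor $\mathcal{Q}$ whose members $C_F$ correspond bijectively to the faces $F$ of $G_0$, with $C_F$ a cycle of length $|F|$, while the faces in $\mathcal{Q}^c$ are exactly the hexagons $C_6(v)$, $v\in V(G_0)$, and $C_6(v)\cap C_6(w)$ is an edge if and only if $vw\in E(G_0)$. Since $G_0$ is bipartite and $2$-connected each $|F|$ is even, so every face boundary of $G$ has even length, and, $G$ being $2$-connected, $G$ is bipartite. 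Writing $\{V_1,V_2\}$ for the bipartition of $V(G_0)$, I would then check that colouring every $C_F$ with colour $3$, every $C_6(v)$ with $v\in V_1$ with colour $1$, and every $C_6(v)$ with $v\in V_2$ with colour $2$, is a proper face colouring, hence the natural $3$-face-colouring of $G$: two distinct members of $\mathcal{Q}$ are vertex-disjoint and thus share no edge; two hexagons of the same colour come from non-adjacent vertices of $G_0$ and thus share no edge; and a $C_F$ and a $C_6(v)$ receive colours $3$ and $\in\{1,2\}$. Consequently the colour classes $\mathcal{C}_1=\{C_6(v):v\in V_1\}$ and $\mathcal{C}_2=\{C_6(v):v\in V_2\}$ consist of hexagons only, which is hypothesis~(i) of Theorem~\ref{TH:4conn}, with $\mathcal{C}_3=\mathcal{Q}$.

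To verify hypothesis~(ii), observe that the contraction of the faces of $\mathcal{C}_3=\mathcal{Q}$ is precisely $G/\mathcal{Q}=Lf(G_0)/\mathcal{Q}$; by Lemma~\ref{LEM:cyclic} we have $\kappa\bigl(Lf(G_0)/\mathcal{Q}\bigr)=\kappa'_c(G_0)\ge 4$, so this contraction is $4$-connected. Hence Theorem~\ref{TH:4conn} applies to $G=Lf(G_0)$ and yields that $Lf(G_0)$ is hamiltonian.

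As for the difficulty: once Theorem~\ref{TH:4conn} and Lemma~\ref{LEM:cyclic} are in hand, the argument is essentially bookkeeping and I expect no serious obstacle. The points that do need care are (a) confirming that the colour class destined for contraction, namely $\mathcal{Q}$, really is a facial $2$-factor, so that the contraction is of the kind appearing in $(\bf H)$, rather than one of the hexagon classes; and (b) the minor simplicity/degenerate-case caveat for $G_0$. Everything else is forced by the face structure of the leapfrog extension together with the bipartition of $G_0$ and of its dual.
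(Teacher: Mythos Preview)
Your proposal is correct and follows exactly the route the paper takes: colour the facial $2$-factor $\mathcal{Q}$ (the faces of $Lf(G_0)$ corresponding to faces of $G_0$) with colour~$3$, use Lemma~\ref{LEM:cyclic} to turn $\kappa'_c(G_0)\ge 4$ into $\kappa(Lf(G_0)/\mathcal{Q})\ge 4$, and invoke Theorem~\ref{TH:4conn}. You have simply spelled out in full the details the paper leaves implicit (bipartiteness of $Lf(G_0)$, the explicit $3$-face-colouring via the bipartition of $V(G_0)$, and the simplicity/degenerate-case check), so nothing further is needed.
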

Again let $G$ be a  graph as stated in Theorem~\ref{TH:vertexenvelope}.  Then $H=G/\mathcal{Q}$ is a   triangulation of the plane. Thus by
applying Lemma~\ref{LEM:cyclic} and Proposition~\ref{PR:1}, we may also conclude that
Corollary~\ref{cor:4con-eulerian-triangulation} implies Corollary~\ref{cor:cycl.4-edge-conn}. 
\\

We note in passing that these results together with Theorem~\ref{TH:LF-ham.} are the best partial solutions of 
Barnette's Conjecture, so far.

\begin{theorem}
\label{TH:1}
Let  $G$ be a planar $3-$connected cubic graph with a facial $2-$factor $\mathcal{Q}$. Suppose all $\mathcal{Q}^{c}-$faces of $G$ are either quadrilaterals or 
 hexagons, 
while the $\mathcal{Q}-$faces are arbitrary.
Assume  the outer face of the reduced
graph $H$ obtained from $G$ by the contraction of the $\mathcal{Q}-$faces is a triangle $T$,
and assume that $T$ and every triangle in  $H$ has an
even number of vertices in its interior. 
 If every direct successor in $H$ contains no separating
  digon $($if a direct successor exists$)$, then $H$ has a  spanning tree of faces that are triangles, yielding a
hamiltonian cycle for $G$.
\end{theorem}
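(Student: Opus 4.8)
The plan is to reduce Theorem~\ref{TH:1} to Proposition~\ref{PR:2} by showing that the hypotheses here imply that $H$ satisfies the invariant property. Recall that the invariant property for $T$ requires two things: first, that $T$ and every triangle inside of $T$ has at most two distinct direct successors, and second, that there is no separating digon inside of $T$. We are given that $T$ and every triangle in $H$ has an even number of vertices in its interior, and that every direct successor (when one exists) contains no separating digon. So the first step is to establish the ``no separating digon'' part, and the second step is to bound the number of direct successors; once both are in hand, Proposition~\ref{PR:2} applies directly, since the outer face of $H$ is the triangle $T$ and (as I will argue) $|V(H)|$ is odd.

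First I would handle the parity bookkeeping. The outer triangle $T$ has three vertices on its boundary and, by hypothesis, an even number, say $2m$, in its interior; hence $|V(H)| = 3 + 2m$ is odd, which is exactly what Proposition~\ref{PR:2} needs. Next, for the absence of a separating digon inside $T$: a separating digon $D$ inside $T$ would be contained in $int(T)$, and I would locate the innermost triangle or the outer triangle $T$ that ``owns'' the region where $D$ lives --- more precisely, take the $\preccurlyeq$-minimal triangle $T_1$ with $int(D) \subseteq int(T_1)$ among $\{T\} \cup \mathcal{A}$ (this exists by planarity and finiteness). If $T_1 = T$, then $D$ is a separating digon whose interior is not contained in the interior of any separating triangle; but $int(D)$ contains a vertex, and one checks that the two vertices of $D$ together with an interior vertex force $int(D)$ to contain at least... here I would instead argue via parity: the region strictly between $T$ and everything below, cut along $D$, produces two subregions whose vertex counts must sum correctly, and a digon is too small to split $2m + 3$ vertices evenly in a way consistent with the triangulation-like structure --- but this is exactly the kind of argument the hypothesis is designed to sidestep. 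The cleaner route: if $D$ is separating inside $T$, then either $D$ sits inside some separating triangle $T'$, in which case $D$ is inside a direct successor chain and the hypothesis on direct successors (``every direct successor contains no separating digon'') propagates downward since a separating digon inside $T'$ would, after minimal choice, lie inside a direct successor of $T$ and then recursively inside a direct successor of that, contradicting the hypothesis at the bottom; or $D$ is not inside any separating triangle, and then the even-interior hypothesis on the relevant triangle gives a parity contradiction because a digon with interior vertices splits an even set into two parts whose parities I would track through the triangular faces.

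The main obstacle will be the second part: bounding the number of direct successors of every triangle by two. The even-number-of-interior-vertices hypothesis is the crucial lever here. If a triangle $T_1$ inside $T$ had three distinct direct successors $T_2, T_2', T_2''$, I would argue that these three separating triangles, being pairwise non-nested (none is $\preccurlyeq$ another, by definition of direct successor) and all inside $T_1$, partition $int(T_1)$ into regions; I then count vertices. The key point I expect to need: a direct successor $T_i$ contains an \emph{even} number of vertices in its interior (by hypothesis), each $T_i$ also has at most one vertex on $T_1$... no, rather, $|V(T_1) \cap V(T_i)| \le 1$ need not hold in general, so I must be careful. The honest version: the three direct successors each carve off an even interior, the ``leftover'' region of $int(T_1)$ outside all three also has some vertex count, and $T_1$ itself has an even interior count by hypothesis; combining these with an Euler/parity argument about how separating triangles can nest three-wise in a plane triangulation-like graph should yield that three direct successors force an odd count somewhere, the contradiction. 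I would then apply Theorem~\ref{LE:1} repeatedly, exactly as in the proof of Proposition~\ref{PR:2}: each contraction of a triangular face removes two vertices, preserving oddness of the order and preserving the invariant property (here I must double-check that Theorem~\ref{LE:1}'s preservation statement is compatible with maintaining the ``no digon in direct successors'' refinement, but since Theorem~\ref{LE:1} already preserves the full invariant property, the stronger hypothesis is only needed to \emph{enter} the induction, not to sustain it). We terminate with only the outer triangle $T$ and parallel edges, and the contracted triangular faces together with the innermost face on all three vertices of $T$ form a spanning tree of faces, giving the hamiltonian cycle of $G$ via Proposition~\ref{PR:1}. Thus Theorem~\ref{TH:1} follows.
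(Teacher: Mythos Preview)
Your approach differs from the paper's and has a genuine gap at the key step. You try to establish the invariant property for \emph{all} of $H$ and then invoke Proposition~\ref{PR:2} once. But your parity argument for ``at most two direct successors'' does not go through: if a triangle $T_1$ has $k$ pairwise vertex-disjoint direct successors $S_1,\dots,S_k$ (disjoint also from $T_1$), then
\[
|int(T_1)\cap V(H)| \;=\; 3k \;+\; \sum_{i=1}^{k}|int(S_i)\cap V(H)| \;+\; r,
\]
where $r$ counts the remaining interior vertices. Since each $|int(S_i)\cap V(H)|$ is even by hypothesis, the evenness of $|int(T_1)\cap V(H)|$ only forces $k\equiv r\pmod 2$; it places no upper bound on $k$. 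Allowing shared vertices among the $S_i$ or with $T_1$ does not rescue the count. So the even-interior hypothesis by itself does not rule out three or more direct successors, and you cannot feed $H$ directly into Proposition~\ref{PR:2}. Your treatment of the ``no separating digon'' clause is similarly incomplete: the hypothesis speaks only about direct successors, and your fallback parity sketch for digons not sitting inside any separating triangle is not an argument.

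The paper sidesteps this entirely. It never claims the invariant property holds globally. Instead it inducts on $|V(H)|$: pick a $\preccurlyeq$-minimal separating triangle $T_1$; since nothing separating lies inside $T_1$, the invariant property holds \emph{for $T_1$} trivially (and the digon clause is exactly where the hypothesis on direct successors is spent). Theorem~\ref{LE:1} then supplies a contractible triangular face $T'$ inside $T_1$; contracting $T'$ drops the order by two and keeps every triangle's interior-vertex count even, while any new separating digon would have come from a separating triangle inside $T_1$, which does not exist. So $H'=H/T'$ again satisfies the hypotheses of Theorem~\ref{TH:1}, the induction gives a spanning tree of faces in $H'$, and adding $T'$ back lifts it to $H$. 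The point is that the even-interior hypothesis is used to preserve the \emph{inductive} hypotheses across a single local contraction, not to certify a global structural property of $H$.
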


\begin{proof}{
Consider $H$; it is of odd order $n$.
We proceed by induction on ${n}$.
For $n=3$ since $G$ is $3-$connected, $H$ is a triangle with some parallel edges. Note that, $H$ has no separating triangle,
thus by Proposition~\ref{PR:2}, $H$ has a spanning tree of faces.

Assume that the theorem is true for every graph of odd order less than $n$ satisfying the hypothesis.

If $H$ has no separating triangle, then by Proposition~\ref{PR:2}, $H$ has a spanning tree of faces. Thus, there is  a separating triangle $T_1$ in $H$ such that no triangle inside of $T_1$ is  separating.
Therefore,  $|int(T_1)\cap V(H)|\ge 2$   but contains no 
separating triangle nor a separating digon.
Thus, $T_1$ satisfies the invariant property.
Therefore by Theorem~\ref{LE:1},  there exists a triangular face $T^{'}$  such that
$int(T^{'})\subset int(T_1)$ and $|V(T_1)\cap V(T^{'})|\le 1$,  and after contracting $T^{'}$ to a single vertex, $T_1$ will satisfy 
the invariant property. 
Now, let $H^{'}$ be the graph obtained from $H$ by contracting ${T^{'}}$.

It is easy to see that $H^{'}$ satisfies all hypotheses of Theorem~\ref{TH:1} and its order is $n-2$.
(Note that every separating digon in $H^{'}$ would derive from a separating triangle inside $T_1$ in $H$, contrary to the choice of $T_1$). Thus by induction,
$H^{'}$ has a  spanning tree of faces that are triangles with face  set $\mathcal{T}^{'}$. 
Let $\mathcal{T}$ be the union of the set of the corresponding faces of $\mathcal{T}^{'}$ in $H$ and the set  ${\{T^{'}\}}$.
It can be easily seen that $H_\mathcal{T}$ is a  spanning tree of faces in $H$. This completes the proof of Theorem~\ref{TH:1}.
}\end{proof}

We note finally  that in~\cite{FleischnerEnvelope},
hamiltonicity in the leapfrog extension of a plane cubic graph was studied from a different point of view.

\end{document}